\DeclareFontFamily{OT1}{pzc}{}
\DeclareFontShape{OT1}{pzc}{m}{it}{<-> s * [1.100] pzcmi7t}{}
\DeclareMathAlphabet{\mathpzc}{OT1}{pzc}{m}{it}
\newcommand{\bigast}{\mathop{\scalebox{1.5}{\raisebox{-0.2ex}{$\ast$}}}}
    \patchcmd{\section}{\scshape}{\large\bfseries}{}{}
    \renewcommand{\@secnumfont}{\bfseries}
\numberwithin{equation}{section}
\newtheorem{theorem}{Theorem}[section]
\newtheorem*{theorem*}{Theorem}
\newtheorem{corollary}[theorem]{Corollary}
\newtheorem{lemma}[theorem]{Lemma}
\newtheorem{proposition}[theorem]{Proposition}
\theoremstyle{definition}
\newtheorem{question}{Question}
\newtheorem{remark}[theorem]{Remark}
\newtheorem{example}[theorem]{Example}
\def\DD{\mathpzc{D}}
\def\CC{\mathpzc{C}}
\def\ZZ{\mathbb{Z}}
\def\VV{\mathcal{V}}
\def\QQ{\mathbb{Q}}
\def\Im{\mathrm{Im}}
\def\Ker{\mathrm{Ker}}
\def\Id{\mathrm{Id}}
\def\IM{\mathcal{IM}}
\def\RR{\mathcal{R}}
\title{A note on the non-existence of functors}
\begin{document}

\author{E. Dror Farjoun}
\email{edfarjoun@gmail.com}
\author{S. O. Ivanov} 
\email{ivanov.s.o.1986@bimsa.cn, ivanov.s.o.1986@gmail.com}

\author{A. Krasilnikov}
\email{avkrasilnikov@edu.hse.ru, kras1lnikoff.av@gmail.com}

\author{A. Zaikovskii }
\email{erifiby@gmail.com}

\begin{abstract}
We consider several types of non-existence theorems for functors. For example, there are no nontrivial functors from the category of groups (or the category of pointed sets, or vector spaces) to any small category. Another type of questions that we consider are questions about  nonexistence of subfunctors and quotients of the identity functor on the category of groups (or abelian groups). For example, there is no a natural non-trivial way to define an abelian subgroup of a group, or a perfect quotient group of a group. 
As an auxiliary result we prove that, for any non-trivial subfunctor $F$ of the identity functor on the category of groups, any group can be embedded into a simple group that lies in the essential image of $F.$ The paper concludes with a few questions regarding the non-existence of certain (co-)augmented functors in the $\infty$-category of spaces. 
\end{abstract}

\maketitle

\section{Introduction}

It is natural to expect that many large and ``rich'' categories should  have only trivial functors to any small category. A typical example of a statement proven below is  that any functor from the category of groups to the category of finite groups is constant. We show that  any functor from a strongly connected (i.e. all hom-sets are non-empty) category $\CC$ with small products to a small category $\DD$ is a constant functor. Moreover, we prove a version of this statement that depends on a cardinal. 
\begin{theorem*}
If there is a cardinal $\kappa$ such that $\CC$ is a strongly connected category that has products indexed by sets of cardinality $\leq \kappa,$ and $\DD$ is an essentially small category whose hom-sets have cardinality $\leq \kappa,$ then any functor from $\CC$ to $\DD$ is constant.  
\end{theorem*}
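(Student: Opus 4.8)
The plan is to isolate a single combinatorial core---that $F$ must send any two parallel morphisms to the same morphism of $\DD$---and then to deduce the full statement from it using strong connectedness. The core is proved by a diagonalization that plays the $\kappa$-indexed products available in $\CC$ off against the cardinality bound on the hom-sets of $\DD$. This is the only place the hypotheses are genuinely used (in fact only the hom-set bound, not essential smallness per se), and I expect it to be the crux of the whole argument.

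Concretely, fix parallel morphisms $f,g\colon a\to b$; the case $f=g$ is trivial, so assume $f\neq g$. Choose an index set $S$ with $|S|=\kappa$ and form $P=\prod_{s\in S}b$, which exists since $\CC$ has products indexed by sets of cardinality $\le\kappa$; write $\pi_s\colon P\to b$ for the projections. For each subset $T\subseteq S$ the universal property supplies a unique $h_T\colon a\to P$ with $\pi_s h_T=f$ for $s\in T$ and $\pi_s h_T=g$ for $s\notin T$, and because $f\neq g$ distinct subsets yield distinct morphisms; thus I obtain $2^{\kappa}$ pairwise distinct morphisms $a\to P$. Applying $F$ sends all of them into $\hom_{\DD}(F(a),F(P))$, a set of cardinality $\le\kappa<2^{\kappa}$, so by pigeonhole there are $T\neq T'$ with $F(h_T)=F(h_{T'})$. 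Picking $s$ in the symmetric difference, say $s\in T\setminus T'$, and postcomposing with $F(\pi_s)$ gives $F(f)=F(\pi_s)\,F(h_T)=F(\pi_s)\,F(h_{T'})=F(g)$. Hence $F(f)=F(g)$ for every parallel pair.

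Granting this, the remainder is formal. For any $u\colon a\to b$, strong connectedness yields some $v\colon b\to a$; as $vu$ and $\mathrm{id}_a$ are parallel, $F(v)F(u)=F(vu)=F(\mathrm{id}_a)=\mathrm{id}_{F(a)}$, and symmetrically $F(u)F(v)=\mathrm{id}_{F(b)}$, so $F$ carries every morphism to an isomorphism. Fixing a base object $a_0$, putting $d=F(a_0)$, and setting $\eta_c=F(w_c)\colon F(c)\to d$ for an arbitrary choice of $w_c\colon c\to a_0$, naturality of $\eta$ against a morphism $\phi\colon c\to c'$ amounts exactly to $F(w_{c'}\phi)=F(w_c)$, which holds because $w_{c'}\phi$ and $w_c$ are parallel. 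Thus $\eta\colon F\Rightarrow\Delta_d$ is a natural isomorphism to the constant functor at $d$. The main obstacle is precisely the diagonalization in the core step: the real content is to manufacture more morphisms than $\DD$ can accommodate and, through the projections, to force a collision that collapses $F(f)$ onto $F(g)$ rather than merely producing an uninformative coincidence. I would additionally check the degenerate case where $\CC$ has no objects, and confirm that ``constant'' is to be read up to natural isomorphism, which is exactly what $\eta$ provides.
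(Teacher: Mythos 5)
Your proof is correct, but it takes a genuinely different route from the paper's. The paper splits the argument in two: it first proves the parallel-morphism collapse under the \emph{additional} hypothesis that $\DD$ itself has $\kappa$-indexed products (Proposition \ref{proposition:main1}), using a factorization lemma (Lemma \ref{lemma:commutative_diagram}) which says that $\prod_\alpha\CC(c',c_\alpha)\to\prod_\alpha\DD(F(c'),F(c_\alpha))$ factors through the hom-set $\DD\left(F(c'),F\left(\prod_\alpha c_\alpha\right)\right)$ via the assembly morphism $v^{\bf c}$; when $F(f)\ne F(g)$ the image of that map has cardinality at least $2^\kappa$, contradicting the bound $\kappa$. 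Then, since a general essentially small $\DD$ need not have such products, the paper composes $F$ with the Yoneda embedding $Y:\DD\to\hat\DD$ into presheaves (which has all small products and, by the Yoneda lemma, inherits the hom-set bound) and uses full faithfulness of $Y$ to transfer the conclusion back --- this Yoneda detour is precisely where essential smallness enters. Your argument short-circuits both steps: by running the pigeonhole directly inside $\DD(F(a),F(P))$ with $P=\prod_{s\in S}b$ formed in $\CC$, and recovering $F(f)=F(g)$ by postcomposing a collision $F(h_T)=F(h_{T'})$ with $F(\pi_s)$ for $s$ in the symmetric difference, you never need any products in $\DD$, nor essential smallness --- as you observe, only the hom-set bound is used. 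So your proof is shorter, self-contained, and actually establishes a slightly stronger theorem (essential smallness can be dropped from the hypotheses). What the paper's modular route buys in exchange is reusability: Lemma \ref{lemma:commutative_diagram} and Proposition \ref{proposition:main1}, together with their duals for coproducts, are invoked again elsewhere (e.g.\ for functors from countable groups to finitely generated groups), and the Yoneda device is of independent interest. Your concluding step --- strong connectedness forces every $F(u)$ to be an isomorphism, and then $\eta_c=F(w_c)$ assembles into a natural isomorphism $F\Rightarrow\Delta_d$ --- is the same as the paper's, except that you write out explicitly the natural isomorphism to the constant functor where the paper leaves it implicit.
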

Interesting examples of  consequences of this theorem are given by the following statements. Any functor from the category 
of countable groups to the category of finitely generated groups is constant. For any two cardinals $\kappa$ and $\kappa'$ such that $\kappa\geq \max(2^{\kappa'},\aleph_0),$ any functor from the category of pointed sets of cardinality at most $\kappa$ to the category of pointed sets of cardinality at most $\kappa'$ is constant.

In a slightly different vein, we consider the impossibility of certain subfunctors and quotients of the identity functor of the category of groups, and the category of abelian groups. For example, the abelianization of a group $G\mapsto G/[G,G]$ is a quotient of the identity functor on the category of groups that takes values in the subcategory of abelian groups. 
So, there is a natural quotient of any group which is abelian. Is it possible to find a natural abelian subgroup in a group $F(G)\subseteq G$ in a non-trivial way? 
The center of a group comes to mind, but it is not a functor. We show that this is impossible, there is no a non-trivial subfunctor of the identity functor on the category of groups taking values in the subcategory of abelian groups. 
In order to prove this statement, we study the essential image of a sub-functor of the identity functor. 
The essential image  $\IM(F)\subseteq {\sf Ob}(\DD)$ of a functor $F:\CC\to \DD$ is defined as the class of objects isomorphic to $Fc$ for some $c.$ It is well known that any group can be embedded into a simple group \cite{schupp1976embeddings}.  The following theorem can be interpreted as a strengthening of this statement.

\begin{theorem*}
Let $F$ be a non-trivial subfunctor of the identity functor on the category of groups, and $G$ be a group. Then there exists a simple group $S$ such that $G\subseteq S$ and $S=FS\in \IM(F).$ 
\end{theorem*}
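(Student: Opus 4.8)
The plan is to reduce the statement to the cited embedding theorem of Schupp by exploiting one structural feature of subfunctors of the identity. The key preliminary observation is that for \emph{any} subfunctor $F$ of the identity and any group $H$, the subgroup $FH$ is fully invariant: if $e\colon H\to H$ is any endomorphism, then naturality of $F$ applied to $e$ gives $e(FH)\subseteq FH$. In particular $FH$ is invariant under all inner automorphisms, so $FH$ is a normal subgroup of $H$. Consequently, if $S$ is a \emph{simple} group, the normal subgroup $FS$ must be either trivial or all of $S$; that is, $FS=S$ as soon as $FS\neq 1$. This collapses the whole problem: it will suffice to produce a simple group $S$ with $G\subseteq S$ and $FS\neq 1$, and the equality $S=FS$ (hence $S\in\IM(F)$) then comes for free.

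Second, I would manufacture a single nontrivial $F$-element sitting in a group that also contains $G$. Since $F$ is non-trivial, there is a group $G_0$ and an element $1\neq x\in FG_0$. Form the free product $H = G_0 * G$. The canonical inclusion $\iota_0\colon G_0\hookrightarrow H$ is a group homomorphism, so naturality gives $x=\iota_0(x)\in FH$, and $x\neq 1$ because the inclusion of a free factor is injective. At the same time $G$ is a free factor of $H$, so $G\subseteq H$. Thus $H$ is a group containing $G$ with $FH\neq 1$.

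Finally, I would invoke Schupp's theorem \cite{schupp1976embeddings}: the group $H$ embeds into a simple group $S$, say via an injective homomorphism $j\colon H\hookrightarrow S$. Naturality of $F$ applied to $j$ gives $j(x)\in FS$, and $j(x)\neq 1$ since $j$ is injective; hence $FS\neq 1$. By the first paragraph $FS=S$, so $S=FS\in\IM(F)$, while $G\subseteq H\subseteq S$. This proves the theorem.

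The only genuine input from outside is the cited embedding theorem; the rest is formal. The step I expect to carry the real weight is therefore the first observation, that $FS$ is automatically normal in $S$: it is what lets ``$S$ lies in the essential image'' be deduced from the far weaker ``$S$ contains one nontrivial value of $F$.'' Once this is in hand, the free-product device only has to transport a nontrivial value of $F$ into a group that also contains $G$, and injectivity of the maps involved takes care of everything else.
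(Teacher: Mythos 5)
Your proof is correct and follows essentially the same route as the paper: both rest on the observation that $FS$ is normal in $S$ (so simplicity forces $FS=1$ or $FS=S$), on Schupp's embedding theorem, and on the trick of adjoining a witness group with a nontrivial value of $F$ to $G$ before embedding into a simple group. The only differences are cosmetic: you argue directly, transporting a nontrivial element of $FG_0$ through the free product $G_0 * G$ into the simple group, whereas the paper argues by contradiction using the direct product $G\times B$; both devices serve the identical purpose.
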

This theorem implies that for any proper variety of groups $\VV$ and any augmented functor on the category of groups $F:{\sf Gr}\to {\sf Gr}$ with augmentation $\varepsilon:F\to {\rm Id},$ the augmentation $\varepsilon$ is trivial. 

A similar question to the question about a natural abelian subgroup of a group is the question about a natural perfect quotient of a group. There is a natural perfect subgroup of any group called perfect core. Is there a natural perfect quotient? We answer this question in the negative. Moreover, we prove the following theorem. 

\begin{theorem*}
Let $Q$ be a non-trivial quotient of the identity functor on the category of groups. Then $\IM(Q)$ contains either a non-trivial finite cyclic group, or all free abelian groups.  
\end{theorem*}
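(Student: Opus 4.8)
The plan is to reduce everything to the behaviour of the associated normal-subgroup subfunctor on the infinite cyclic group $\ZZ$. Since $Q$ is a quotient of the identity, there is a natural transformation $\eta\colon \Id \to Q$ that is pointwise surjective; setting $N(G):=\Ker(\eta_G)$ produces a subfunctor $N$ of the identity functor, because naturality of $\eta$ forces $f(N(G))\subseteq N(H)$ for every homomorphism $f\colon G\to H$, and $Q(G)\cong G/N(G)$ by the first isomorphism theorem. Thus, up to isomorphism, $\IM(Q)$ consists of the groups $G/N(G)$, and it suffices to analyse $N$.

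First I would compute $N(\ZZ)$: as a subgroup of $\ZZ$ it equals $n\ZZ$ for a unique $n\geq 0$. The one crucial naturality input comes from the evaluation homomorphisms $\rho_g\colon \ZZ\to G$, $1\mapsto g$; applying $\rho_g$ to $N(\ZZ)=n\ZZ$ shows that $g^n\in N(G)$ for every $g\in G$ and every group $G$. In particular, if $n=1$ then $N(G)=G$ for all $G$, so $Q$ is the constant functor at the trivial group; the non-triviality hypothesis therefore rules out $n=1$, leaving exactly the cases $n\geq 2$ and $n=0$, which will yield the two alternatives of the theorem. If $n\geq 2$, then $Q(\ZZ)\cong \ZZ/n\ZZ$ is a non-trivial finite cyclic group lying in $\IM(Q)$, and the first alternative holds.

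The remaining, and main, case is $n=0$, where I claim every free abelian group lies in $\IM(Q)$. For a free abelian group $A=\ZZ^{(S)}$ I would show directly that $N(A)=0$, which gives $Q(A)\cong A$ and hence $A\in\IM(Q)$. Indeed, $A$ is residually $\ZZ$: every non-zero $a\in A$ is detected by some coordinate projection $\pi\colon A\to \ZZ$ with $\pi(a)\neq 0$. By the subfunctor property applied to $\pi$ we have $\pi(N(A))\subseteq N(\ZZ)=0$, so no non-zero element of $A$ can lie in $N(A)$, whence $N(A)=0$. I expect the only genuine subtlety to be this last step—pinning down $N$ on all free abelian groups from its vanishing on $\ZZ$—which is handled precisely by combining naturality with the separating family of coordinate projections $A\to \ZZ$.
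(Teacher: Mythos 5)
Your proof is correct, and its skeleton matches the paper's: pass to the kernel subfunctor $N(G)=\Ker(G\to Q(G))$, note that $N(\ZZ)=n\ZZ$, and argue by trichotomy on $n$ (with $n\geq 2$ giving a non-trivial finite cyclic group and $n=0$ giving all free abelian groups). Where you genuinely differ is in how the two key steps are justified, and in both places your argument is more elementary and self-contained. To exclude $n=1$, the paper invokes Corollary \ref{cor:Z} (a subfunctor of the identity with $F(\ZZ)=\ZZ$ is the identity), whose proof runs through Corollary \ref{cor:classes}(1): closure of $\{G : F(G)=G\}$ under free products and quotients, plus the fact that every group is a quotient of a free group. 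You instead apply naturality to the evaluation maps $\rho_g\colon \ZZ\to G$, $1\mapsto g$, to get $g\in N(G)$ for every $g\in G$ directly. For the case $n=0$, the paper uses Corollary \ref{cor:classes}(2) --- closure of $\{G : F(G)=0\}$ under products and subgroups --- applied to the embedding $\ZZ^{\oplus X}\subseteq \ZZ^{X}$; you instead observe that the coordinate projections $\ZZ^{(S)}\to \ZZ$ separate points and push $N(A)$ into $N(\ZZ)=0$ along each of them. Both substitutions are valid. What the paper's route buys is reusable infrastructure: Lemma \ref{lemma:subfunctor} and Corollary \ref{cor:classes} are stated once and serve several results in that section (including the abelian-group analogues). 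What your route buys is brevity and independence: everything needed is extracted from naturality applied to maps into and out of $\ZZ$, with no auxiliary closure lemmas.
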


In particular, we obtain that, for any natural quotient $G\twoheadrightarrow QG$ of a group with $QG$ always a free group, $QG$ is trivial. Similarly if $QG$ is always either trivial or a non-abelian group then $QG$ is identically the trivial group. We also prove some similar statements about the category of abelian groups. 

Note that the subcategory of abelian groups is reflective in the category of groups. There are many other examples of reflective subcategories of groups: nilpotent groups of class $n$, metabelian groups, $n$-Burnside groups, uniquely divisible groups, $HR$-local groups of Bousfield  (for more examples see \cite{akhtiamov2021right}). We show that there is no a non-trivial subfunctor of the identity functor in the category of groups having values in these subcategories. 

\begin{theorem*}
Let $\RR$ be a proper reflective subcategory of the category of groups. Then there is no a non-trivial subfunctor of the identity functor on the category of groups having values in $\RR.$
\end{theorem*}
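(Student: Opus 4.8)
The plan is to argue by contradiction: assume there is a non-trivial subfunctor $F$ of the identity functor with $FG\in\RR$ for every group $G$, and that $\RR$ is proper. The one input I would extract from the earlier results is an embedding statement. By the simple-group theorem, every group $G$ sits inside a simple group $S$ with $G\subseteq S$ and $S=FS$; since $F$ takes values in $\RR$ and $S=FS$, this $S$ lies in $\RR$. Hence \emph{every group embeds into an object of $\RR$}. First I would record the immediate consequence that every reflection unit is a monomorphism: writing $L$ for the reflector and $\eta_G\colon G\to LG$ for the unit, any embedding $G\hookrightarrow R$ with $R\in\RR$ factors through $\eta_G$ by the universal property, so $\eta_G$ is injective.

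Next I would exploit properness. Since $\RR$ is proper there is a group $A\notin\RR$, so $\eta_A$ is a monomorphism that is not an isomorphism, hence not surjective; identifying $A$ with $\eta_A(A)\subseteq LA$, I choose $b\in LA\setminus A$. The heart of the argument is to produce two homomorphisms out of $LA$ into an object of $\RR$ that agree on $A$ but disagree on $b$, contradicting the uniqueness clause in the universal property of $\eta_A$. To this end I would form the amalgamated free product $P=LA\ast_A LA$ of two copies of $LA$ glued along the common subgroup $A$, with factor inclusions $\iota_1,\iota_2\colon LA\to P$. These agree on $A$ by construction, and by standard properties of amalgamated free products the factors embed and intersect exactly in $A$, so $\iota_1(b)=\iota_2(b)$ would force $b\in A$; hence $\iota_1(b)\neq\iota_2(b)$. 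Now I would apply the simple-group theorem once more to embed $P$ into a (simple) group $S\in\RR$, and set $\alpha=(P\hookrightarrow S)\circ\iota_1$ and $\beta=(P\hookrightarrow S)\circ\iota_2$. Then $\alpha,\beta\colon LA\to S$ satisfy $\alpha\circ\eta_A=\beta\circ\eta_A$ while $\alpha(b)\neq\beta(b)$, the latter because $P\hookrightarrow S$ is injective. Since $S\in\RR$, both $\alpha$ and $\beta$ are extensions along $\eta_A$ of the same map $A\to S$, so uniqueness forces $\alpha=\beta$, contradicting $\alpha(b)\neq\beta(b)$. This shows $\RR={\sf Gr}$, which is absurd, and completes the proof.

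I expect the main obstacle to be locating the right gadget that witnesses the failure of uniqueness. The content of reflectivity is precisely that $\eta_A$ is an epimorphism relative to $\RR$-valued maps, which is a uniqueness statement and never a surjectivity statement, so one cannot hope to show $\eta_A$ is onto directly. The amalgamated free product $LA\ast_A LA$ is the natural device that duplicates the surplus element $b\in LA\setminus A$ into two genuinely distinct elements while remaining constant on $A$, and the embedding theorem is exactly what transports this duplication into $\RR$, where reflectivity then forbids it. Everything else---injectivity of the units and the intersection property of the amalgam---is routine, so the crux is to set up this separation of $b$ correctly.
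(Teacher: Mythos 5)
Your proof is correct, but it takes a genuinely different route from the paper's. The paper never forms an amalgamated free product; instead it introduces a second subfunctor, the kernel of the unit $KG=\Ker(\eta_G\colon G\to LG)$, and proves (Lemma \ref{lemma:monoreflective}) that $K$ is non-trivial whenever $\RR$ is proper --- a step that quotes the classical fact that epimorphisms in ${\sf Gr}$ are surjective. The main argument then picks $G,G'$ with $FG\neq 1\neq KG'$, embeds $G\times G'$ into a simple group $S$, and uses simplicity together with normality of values of subfunctors to force $FS=S$ and $KS=S$ simultaneously: the first puts $S$ in $\RR$ (since $F$ takes values in $\RR$), while the second says the unit of $S$ is trivial, which is impossible for a non-trivial group in $\RR$. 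Your route instead uses Theorem \ref{th:simple_image} as a black box to conclude that \emph{every} group embeds into an object of $\RR$, and then shows this is incompatible with properness: all units become monomorphisms, and the amalgam $LA\ast_A LA$ for $A\notin\RR$, transported into $\RR$ by a second application of the embedding theorem, violates the uniqueness clause of the universal property. The group-theoretic input is comparable in depth: your appeal to the normal form of amalgamated free products plays exactly the role of the paper's appeal to ``epis in ${\sf Gr}$ are surjective'' (whose standard proof is itself by amalgams), so neither argument is more elementary, but yours dispenses with the auxiliary subfunctor $K$ and makes the dependence on the simple-group theorem explicit, where the paper's own proof quietly re-runs that argument inline. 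One remark: your heuristic that one ``cannot hope to show $\eta_A$ is onto directly'' is not quite right --- the paper's Lemma \ref{lemma:monoreflective} does exactly that, upgrading relative uniqueness to an absolute epimorphism and then invoking surjectivity of group epis; also, your second use of the embedding theorem could be avoided by composing $\iota_1,\iota_2$ with the unit $\eta_P\colon P\to LP$, which is monic by your first step and lands in $LP\in\RR$.
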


However, the dual statement does not hold (Example \ref{example:coreflective}), and the analogue of this statement for the category of abelian groups does not hold as well (Example \ref{example:reflective_abelian}).

Our work was partly motivated by questions from homotopy theory. It is natural to expect that there should be analogous non-existence propositions  in the $\infty$-category of spaces. Next, we will propose some ``non-existence'' questions from homotopy theory. We denote by $\mathcal{S}^{\geq 1}_*$ the $\infty$-category of pointed connected spaces, and by $\mathcal{S}p$ the $\infty$-category of spectra. The following question is a natural analogue of the result  about the non-existence of a non-trivial natural abelian subgroup in a group.

\begin{question}
For a functor  $F:\mathcal{S}^{\geq 1}_* \to \mathcal{S}p$ from the $\infty$-category of spaces to spectra, is every natural transformation to the identity functor $ \Omega^\infty F X \to X$ null-homotopic?
\end{question}

\begin{question}

 Let $F:\mathcal{S}^{\geq 1}_* \to \mathcal{S}^{\geq 1}_*$ be a augmented  functor from the $\infty$-category of pointed connected spaces to itself and $n\geq 1$ be an integer such that the map $\pi_n(FX)\to \pi_n(X)$ is trivial for all $X$. Does it follow that the augmentation   
$FX\to X$ naturally and uniquely factors homotopically through the $(n+1)$-th stage of the Whitehead tower    
$X^{\geq n+1}\to X.$ Or, more technically, the $n$-connected cover is the terminal  augmented functor, $FX\to X,$ with $\pi_n (FX)\to \pi_n (X)$ is the zero map (note that here the zero map is assumed only for one $n$). In a slogan, ``if $FX\to X$ kills one homotopy group, it also kills all the lower ones''. 
\end{question}

Similar results about coaugmented functors can be found in \cite{farjoun1996higher}. From each of the above implication one deduces immediately that, on the $\infty$-category of spaces, there are only trivial natural transformations $F\to {\rm Id}$ such that and for all $X$ one has $FX\cong K(A,n)$  for some $n\geq 1.$ Note that the last space has the structure of abelian group, thus this implication is a direct analog of Theorem \ref{noabeliansubgroups} below.


\section{Functors from large to small categories}

In preparation for the  propositions below, we establish some notations.
Let $\CC$ be a category. For any family of objects ${\bf c}=(c_\alpha)_{\alpha\in A}$ such that the product $\prod_{\alpha} c_\alpha$ exists, and any $\beta \in A$ we denote by 
\begin{equation}
  p^{\bf c}_\beta : \prod_{\alpha} c_\alpha \longrightarrow c_\beta  
\end{equation}
the canonical projection from the product. For any object $c'$ we denote by 
\begin{equation}
t : \prod_{\alpha} \CC(c',c_\alpha) \longrightarrow \CC\left(c', \prod_{\alpha} c_\alpha \right) 
\end{equation}
the standard bijection, which is uniquely defined by the formula 
\begin{equation}\label{eq:t}
p^{\bf c}_\beta \circ t(\varphi) = \varphi_\beta  
\end{equation}
for any family $\varphi = (\varphi_\alpha:c\to c_\alpha)_{\alpha\in A}$ and any $\beta\in A.$ 

\smallbreak

Let $\CC,\DD$ be two categories,  and let $F:\CC\to \DD$ be a functor. Assume that  ${\bf c}=(c_\alpha)_{\alpha\in A}$ is a family of objects of $\CC$ such that the products $\prod_{\alpha} c_\alpha$ and $\prod_\alpha F(c_\alpha)$ exist. Consider the standard assembly morphism
\begin{equation}
 v^{\bf c}:F\left( \prod_{\alpha} c_\alpha \right) \longrightarrow \prod_{\alpha} F(c_\alpha) 
\end{equation}

such that $p^{F({\bf c})}_\beta \circ  v^{{\bf c}} = F(p^{{\bf c}}_\beta)$ for any $\beta \in A.$ Here we denote by $F({\bf c})$ the family $(F(c_\alpha))_{\alpha\in A}.$
\\

The following is a key observation,
saying that for any  $F:\CC\to \DD$ as above, certain  natural maps between "large products" factors through a mapping set inside $\DD:$
\\
\begin{lemma}\label{lemma:commutative_diagram} For any object $c'$ of $\CC$ and any family ${\bf c}=(c_\alpha)_{\alpha\in A}$ of objects of $\CC$ such that the products $\prod_{\alpha} c_\alpha$ and $\prod_{\alpha} F(c_\alpha)$ exist the diagram 
 \[
\begin{tikzcd}
\prod\limits_{\alpha} \CC(c',c_\alpha) \ar[rr,"\prod F"] \ar[d,"t","\cong"'] 
& 
&  
\prod\limits_{\alpha} \DD(F(c'),F(c_\alpha)) \ar[d,"t","\cong"']
\\
\CC\left(  c' , \prod\limits_{\alpha} c_\alpha \right) \ar[r,"F"] 
&
\DD\left( F( c') , F\left( \prod\limits_{\alpha} c_\alpha \right)\right) 
\ar[r,"{v^{\bf c}_*}"]
& 
\DD( F(c'), \prod\limits_\alpha F(c_\alpha) )
\end{tikzcd}
\]
is commutative.  In other words, for any family of morphisms $\varphi=(\varphi_{\alpha}:c' \to c_\alpha)_{\alpha\in A}$ we have
\[
v^{\bf c} \circ F(t(\varphi))  = t({F(\varphi)}).
\] 
\end{lemma}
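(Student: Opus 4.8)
The plan is to reduce the equality of the two composite maps to a pointwise check via the universal property of the product $\prod_\alpha F(c_\alpha)$. Concretely, both $v^{\bf c}\circ F(t(\varphi))$ and $t(F(\varphi))$ are morphisms $F(c')\to \prod_\alpha F(c_\alpha)$ in $\DD$, and since the bijection $t$ associated to the family $F({\bf c})$ is injective, it suffices to show that these two morphisms have the same composite with the projection $p^{F({\bf c})}_\beta$ for every $\beta\in A$.

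For the right-hand side this is immediate from the defining property \eqref{eq:t} of $t$: since $F(\varphi) = (F(\varphi_\alpha))_{\alpha\in A}$, we have $p^{F({\bf c})}_\beta\circ t(F(\varphi)) = F(\varphi_\beta)$. For the left-hand side I would first use the defining property of the assembly morphism, $p^{F({\bf c})}_\beta\circ v^{\bf c} = F(p^{\bf c}_\beta)$, to rewrite the composite as $F(p^{\bf c}_\beta)\circ F(t(\varphi))$; then functoriality of $F$ turns this into $F(p^{\bf c}_\beta\circ t(\varphi))$; and finally \eqref{eq:t} applied to the family ${\bf c}$ gives $p^{\bf c}_\beta\circ t(\varphi) = \varphi_\beta$, so the whole composite equals $F(\varphi_\beta)$. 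Thus both sides have the same $\beta$-projection for every $\beta$, and the claimed equality follows.

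There is no substantive obstacle here; the only thing requiring care is to keep the two instances of the bijection $t$ and of the projections $p$ apart -- one set associated to the family ${\bf c}$ in $\CC$ and one to the family $F({\bf c})$ in $\DD$ -- and to invoke functoriality of $F$ at exactly the right moment. The commutativity of the diagram as drawn is then simply the restatement of this identity obtained by chasing a family $\varphi=(\varphi_\alpha)_{\alpha\in A}$ around the two paths: the top-then-right route sends $\varphi$ to $t(F(\varphi))$, while the left-then-bottom route sends it to $v^{\bf c}\circ F(t(\varphi))$.
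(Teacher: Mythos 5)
Your proof is correct and follows essentially the same route as the paper's: compose both sides with the projections $p^{F({\bf c})}_\beta$, use the defining property of $v^{\bf c}$, functoriality of $F$, and \eqref{eq:t}, then conclude by the uniqueness clause in the universal property of the product. The paper's proof is just a more compressed statement of exactly this computation.
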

\begin{proof}
It follows from the defining property of $t(F(\varphi))$ \eqref{eq:t} and the formula
\begin{equation}
p^{F({\bf c})}_{\beta} \circ v^{\bf c} \circ F(t(\varphi))   = F(p^{\bf c}_\beta\circ t(\varphi))  = F(\varphi_{\beta}).
\end{equation}
\end{proof}

The following non-existence results are  quick implications of the above lemma.  
A category $\CC$ is called \emph{strongly connected} if for any two objects $c,c'\in \CC$ the hom-set $\CC(c,c')$ is not empty.

\begin{proposition}\label{proposition:main1} Let $\kappa$ be a cardinal, 
 $\CC,\DD$ be categories with products indexed by sets of cardinality $\kappa,$ and let $F:\CC\to \DD$ be a functor. Assume that $|\DD(F(c'),F(c))|\leq \kappa$ for any two objects $c',c$ of $\CC.$ Then the following holds
\begin{enumerate}
\item for any two parallel morphisms $f,g:c'\to c$ of $\CC$ we have $F(f)=F(g);$
\item if $\CC$ is strongly connected, then $F$ is isomorphic to a constant functor.
\end{enumerate}
\end{proposition}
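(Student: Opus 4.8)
The plan is to derive both parts from Lemma~\ref{lemma:commutative_diagram} by a cardinality argument, with part~(1) carrying essentially all of the work and part~(2) following formally.

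For part~(1) I would argue by contradiction, assuming $F(f)\neq F(g)$ for some parallel pair $f,g\colon c'\to c$. The lemma says that the top-and-right composite $t\circ\prod F$ agrees with the left-and-bottom composite $v^{\mathbf c}_*\circ F\circ t$. The latter factors through $\DD(F(c'),F(\prod_\alpha c_\alpha))$, a set of cardinality at most $\kappa$ by hypothesis. Since the right-hand vertical map $t$ is a bijection, it follows that the image of $\prod F\colon \prod_\alpha \CC(c',c_\alpha)\to\prod_\alpha\DD(F(c'),F(c_\alpha))$ has cardinality at most $\kappa$. I would then take the constant family $c_\alpha=c$ indexed by a set $A$ with $|A|=\kappa$ (so that the required products exist in both $\CC$ and $\DD$) and restrict attention to the $2^\kappa$ tuples $\varphi\in\CC(c',c)^A$ whose entries lie in $\{f,g\}$. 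Because $F(f)\neq F(g)$, the assignment $\varphi\mapsto(F(\varphi_\alpha))_\alpha$ is injective on these tuples, so the image of $\prod F$ has cardinality at least $2^\kappa$. As $2^\kappa>\kappa$, this contradicts the bound above, forcing $F(f)=F(g)$.

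For part~(2) I would combine part~(1) with strong connectedness. Given objects $c,c'$, choose any $u\colon c\to c'$ and $w\colon c'\to c$; then $wu$ and $\mathrm{id}_c$ are parallel, so part~(1) gives $F(w)F(u)=\mathrm{id}_{F(c)}$, and symmetrically $F(u)F(w)=\mathrm{id}_{F(c')}$, whence each $F(u)$ is an isomorphism. Fixing an object $c_0$ and, for each $c$, a morphism $a_c\colon c\to c_0$, I would set $\eta_c=F(a_c)\colon F(c)\to F(c_0)$; part~(1) makes this independent of the chosen $a_c$. Naturality along $h\colon c\to c'$ then reduces to the identity $F(a_{c'}h)=F(a_c)$, which holds since $a_{c'}h$ and $a_c$ are parallel. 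Thus $\eta$ is a natural isomorphism from $F$ to the constant functor at $F(c_0)$.

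The main obstacle is the cardinality bookkeeping in part~(1): one must ensure that the index set can be taken of cardinality exactly $\kappa$ so that the products appearing in the lemma exist, that the factorization genuinely bounds the image of $\prod F$ (and not merely some coarser invariant) by $\kappa$, and that Cantor's inequality $2^\kappa>\kappa$ supplies the contradiction. Once~(1) is secured, part~(2) is a routine naturality verification.
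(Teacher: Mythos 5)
Your proposal is correct and follows essentially the same route as the paper: part~(1) is the same contradiction argument, applying Lemma~\ref{lemma:commutative_diagram} to a constant family of size $\kappa$ and comparing $2^\kappa$ against the bound $\kappa$ coming from the factorization through $\DD\bigl(F(c'),F\bigl(\prod_\alpha c_\alpha\bigr)\bigr)$, and part~(2) likewise reduces to showing every $F(u)$ is an isomorphism via part~(1) applied to $wu$ and $\mathrm{id}_c$. Your explicit construction of the natural isomorphism to the constant functor at $F(c_0)$ merely fills in a verification the paper leaves implicit.
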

\begin{proof}
(1) Assume the contrary that there exists morphisms $f,g:c'\to c$ such that $F(f)\ne F(g).$ Take a set $A$ such that $|A|=\kappa$ and consider a constant family ${\bf c}=(c_\alpha)_{\alpha\in A},$ where $c_\alpha=c.$ Since the image of the map $\CC(c',c_\alpha)\to \DD(F(c'), F(c_\alpha))$ has at least two elements, the cardinality of the image of the map $\prod \CC(c',c_\alpha)\to \prod \DD(F(c'), F(c_\alpha))$ has cardinality at least $2^{\kappa}>\kappa.$ On the other hand, by Lemma \ref{lemma:commutative_diagram} the map $\prod \CC(c',c_\alpha)\to \prod \DD(F(c'),F(c_\alpha))$ factors through the set $\DD(F(c'),F\left(\prod c_\alpha \right) ) $ whose cardinality $\leq \kappa$ by the assumption, which is impossible.

(2) Since $\CC$ is strongly connected, it is sufficient to prove that for any morphism $f:c'\to c$ the morphism $F(f)$ is an isomorphism. Take a morphism $g:c\to c'.$ By (1) we obtain $F(g)F(f) = F(gf)=F({\sf id}_{c'})={\sf id}_{F(c')}$ and $F(f)F(g)=F(fg)=F({\sf id}_c)={\sf id}_{F(c)}.$ Then $F(f)$ is an isomorphism.
\end{proof}

A category $\DD$ is called essentially small, if it is equivalent to a small category.

\begin{theorem}\label{theorem:main2}
Let $\CC,\DD$ be categories and $F:\CC\to \DD$ be a functor. Assume that $\DD$ is essentially small category and there exists a cardinal $\kappa$ such that 
 $\CC$ has products indexed by any set of cardinality $\kappa$ while $|\DD(d',d)|\leq \kappa$ for all objects $d',d$ of $\DD.$ Then the following holds
\begin{enumerate}
\item for any two parallel arrows $f,g:c'\to c$ we have $F(f)=F(g);$
\item if $\CC$ is strongly connected, then $F$ is isomorphic to a constant functor.
\end{enumerate}
\end{theorem}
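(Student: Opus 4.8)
The plan is to reduce to Proposition~\ref{proposition:main1}. First note that part~(2) is a purely formal consequence of part~(1): given $f\colon c'\to c$, strong connectedness provides some $g\colon c\to c'$, and since $gf$ is parallel to $\mathrm{id}_{c'}$ and $fg$ to $\mathrm{id}_c$, part~(1) forces $F(g)F(f)=F(\mathrm{id}_{c'})=\mathrm{id}_{F(c')}$ and $F(f)F(g)=\mathrm{id}_{F(c)}$; hence every $F(f)$ is an isomorphism and $F$ is isomorphic to a constant functor, exactly as in the proof of Proposition~\ref{proposition:main1}(2). So the real content is part~(1), and the only thing separating it from Proposition~\ref{proposition:main1}(1) is that $\DD$ is no longer assumed to admit $\kappa$-indexed products---compensated by bounding \emph{all} hom-sets of $\DD$ rather than only those between objects in the image of $F$. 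The main obstacle is thus to rerun the counting argument of Proposition~\ref{proposition:main1} when the product $\prod_\alpha F(c_\alpha)$, and hence the assembly map $v^{\bf c}$, need not exist in $\DD$.

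I would dispose of this obstacle by formally adjoining the missing products. As $\DD$ is essentially small I may assume it small (passing to a skeleton changes neither hom-set cardinalities nor the truth of the conclusions), and then compose with the Yoneda embedding $y\colon\DD\hookrightarrow\widehat{\DD}:=[\DD^{\mathrm{op}},\Set]$. The presheaf category $\widehat{\DD}$ is locally small and complete, so in particular it has $\kappa$-indexed products, and since $y$ is fully faithful one has $|\widehat{\DD}(yFc',yFc)|=|\DD(Fc',Fc)|\le\kappa$ for all $c',c$. Therefore $y\circ F\colon\CC\to\widehat{\DD}$ satisfies every hypothesis of Proposition~\ref{proposition:main1}, whose part~(1) gives $yF(f)=yF(g)$ for all parallel $f,g$; faithfulness of $y$ then yields $F(f)=F(g)$, which is part~(1) of the theorem.

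The delicate point is that the product $\prod_\alpha yF(c_\alpha)$ appearing inside Proposition~\ref{proposition:main1} is computed pointwise in $\widehat{\DD}$ and is generally not representable, so it is \emph{not} there that the hom-bound is applied. What makes the argument survive is that the set through which the map factors is $\widehat{\DD}\bigl(yFc',yF(\prod_\alpha c_\alpha)\bigr)$, with $\prod_\alpha c_\alpha$ formed in $\CC$; this equals $\DD\bigl(Fc',F(\prod_\alpha c_\alpha)\bigr)$, of cardinality $\le\kappa$ precisely by the strengthened hypothesis that all hom-sets of $\DD$ are bounded. With $\prod F$ thus factored through a set of size $\le\kappa$, the inequality $2^\kappa>\kappa$ prevents any single $\CC(c',c)\to\DD(Fc',Fc)$ from having a two-element image, giving part~(1).

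Finally, I would remark that presheaves can be bypassed altogether: the assembly map of Lemma~\ref{lemma:commutative_diagram} is inessential to the counting, since the assignment $h\mapsto\bigl(F(p^{\bf c}_\alpha)\circ h\bigr)_{\alpha}$ already defines a function $\DD\bigl(Fc',F(\prod_\alpha c_\alpha)\bigr)\to\prod_\alpha\DD(Fc',Fc_\alpha)$, and a direct check shows $\prod F$ is its composite with $t$ and $F$---only products in $\CC$ intervene. This makes transparent that essential smallness is a convenience rather than a necessity for part~(1); either route reduces the theorem to the single cardinal inequality $2^\kappa>\kappa$.
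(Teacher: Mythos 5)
Your main argument is exactly the paper's proof: pass to a skeleton so $\DD$ is small, compose with the Yoneda embedding $Y\colon\DD\to\hat\DD$, note that $\hat\DD$ is locally small, has all small products, and satisfies $|\hat\DD(YFc',YFc)|=|\DD(Fc',Fc)|\leq\kappa$, apply Proposition \ref{proposition:main1} to $YF$, and conclude by full faithfulness; your reading of where the hom-bound actually gets used (on $\hat\DD\bigl(YFc',YF\bigl(\prod_\alpha c_\alpha\bigr)\bigr)$, a hom between representables, not on the non-representable pointwise product) is the correct account of why that reduction is legitimate.

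Your closing remark, however, is more than a convenience observation and deserves emphasis: it is correct and strictly stronger than what the paper proves. Composing $F\circ t$ with the map $h\mapsto\bigl(F(p^{\bf c}_\alpha)\circ h\bigr)_\alpha$ sends a family $\varphi$ to $\bigl(F(p^{\bf c}_\alpha\circ t(\varphi))\bigr)_\alpha=\bigl(F(\varphi_\alpha)\bigr)_\alpha$, so $\prod F$ factors through the set $\DD\bigl(Fc',F\bigl(\prod_\alpha c_\alpha\bigr)\bigr)$ using only the product in $\CC$; the cardinality argument $2^\kappa>\kappa$ then yields part (1) assuming nothing about $\DD$ except the hom-set bound --- neither products in $\DD$ (as in Proposition \ref{proposition:main1}) nor essential smallness (as in the theorem). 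The paper's hypothesis of essential smallness exists precisely to make $\hat\DD$ a locally small complete category to which Proposition \ref{proposition:main1} applies; your direct factorization removes that crutch entirely, and part (2) then follows formally from part (1) by the same inverse-producing argument, again with no smallness assumption.
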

\begin{proof} Without loss of generality we can assume that $\DD$ is small. 
 Consider the category of presheaves $\hat \DD={\sf Funct}(\DD^{op},{\sf Sets})$ and the Yoneda embedding $Y:\DD\to \hat \DD.$  By the Yoneda lemma we obtain that \[|\hat \DD(Y(d'),Y(d))|=|\DD(d',d)|\leq \kappa\] for any objects $d',d$ of $\DD.$ We also know that $\hat \DD$ has all small products. Then we can apply Proposition \ref{proposition:main1} to the functor $YF:\CC\to \hat \DD.$ The assertion follows from the fact that $Y$ is fully faithful. 
\end{proof}

\begin{corollary}[{cf. \cite[Th.2.1]{shulman2008set}}]
Any small category with small products is a preorder. 
\end{corollary}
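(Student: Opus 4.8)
The plan is to specialize the preceding results to the identity functor. Given a small category $\CC$ with all small products, I would set $\DD=\CC$ and $F=\Id_\CC$ and invoke Proposition \ref{proposition:main1}(1): it asserts that any two parallel morphisms $f,g:c'\to c$ satisfy $F(f)=F(g)$, which for $F=\Id_\CC$ reads $f=g$. This is exactly the statement that $\CC$ is a preorder, so the whole argument reduces to checking that the hypotheses of the proposition hold for a suitable cardinal $\kappa$.

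The one point needing attention is the choice of $\kappa$. Because $\CC$ is small, its collection of objects is a set and every hom-set $\CC(c',c)$ is a set; hence $\{\,|\CC(c',c)| : c',c\in{\sf Ob}(\CC)\,\}$ is a set of cardinals, and I would take $\kappa$ to be its supremum (equivalently, $\kappa=|{\sf Mor}(\CC)|$ works just as well). By construction $|\CC(c',c)|\le\kappa$ for all objects $c',c$, which is the hom-set size hypothesis of Proposition \ref{proposition:main1} with $F=\Id_\CC$. The product hypothesis---that $\CC$ admits products indexed by sets of cardinality $\kappa$---is automatic, since $\CC$ has all small products and any set of cardinality $\kappa$ is small. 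With both hypotheses verified, Proposition \ref{proposition:main1}(1) applies.

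I do not expect a substantive obstacle: all the real work is contained in Lemma \ref{lemma:commutative_diagram} and Proposition \ref{proposition:main1}, and the corollary is a direct specialization to $F=\Id_\CC$. The only subtlety is foundational, namely that smallness of $\CC$ is precisely what guarantees a single cardinal simultaneously bounding all the hom-sets; this is what lets the cardinality-counting argument inside Proposition \ref{proposition:main1} (where the inequality $2^\kappa>\kappa$ forces the factoring map to collapse) go through.
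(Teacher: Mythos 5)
Your proof is correct and is essentially the paper's proof: the paper likewise takes $\CC=\DD$, $F=\Id$, and $\kappa=|{\sf Mor}(\CC)|$. The only (immaterial) difference is that the paper invokes Theorem \ref{theorem:main2}(1) while you invoke Proposition \ref{proposition:main1}(1) directly; your route is legitimate, and in fact slightly more direct, since $\DD=\CC$ itself has the required products, making the Yoneda-embedding step hidden inside the proof of Theorem \ref{theorem:main2} unnecessary.
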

\begin{proof}
Take $\CC=\DD,$ $F=\Id$ and $\kappa=|{\sf Mor}(\CC)|,$ and apply Theorem \ref{theorem:main2}(1).
\end{proof}

\begin{corollary}\label{cor: constant} Let $\CC$ be a strongly connected category with small products and $\DD$ be an essentially small category. Then any functor $ \CC\to \DD$ is isomorphic to a constant functor.   
\end{corollary}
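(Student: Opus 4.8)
The plan is to exhibit a single cardinal $\kappa$ for which the hypotheses of Theorem \ref{theorem:main2} are met, and then to invoke part (2) of that theorem directly; the corollary is essentially a packaging of the theorem in which the dependence on $\kappa$ has been absorbed into the essential smallness of $\DD$.

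First I would use essential smallness to obtain a uniform bound on the hom-sets of $\DD$. Fixing an equivalence $\DD \simeq \DD_0$ with $\DD_0$ small, every hom-set $\DD(d',d)$ is in bijection with a hom-set of $\DD_0$, and since $\DD_0$ is small its collection of morphisms $\mathrm{Mor}(\DD_0)$ is a genuine set. Setting $\kappa = |\mathrm{Mor}(\DD_0)|$ then yields $|\DD(d',d)| \leq \kappa$ for all objects $d',d$ of $\DD$, since each such hom-set injects into $\mathrm{Mor}(\DD_0)$ via the equivalence. The one point worth stating carefully is that this uniform bound only becomes available after passing to a small model: the objects of $\DD$ itself may form a proper class, so the supremum of the cardinalities $|\DD(d',d)|$ has to be taken over a set before it can be regarded as a well-defined cardinal.

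Next I would check that the product hypothesis on $\CC$ holds for this $\kappa$ automatically. Because $\CC$ has all small products and a set of cardinality $\kappa$ is in particular a set, $\CC$ admits products indexed by every set of cardinality $\kappa$. With both conditions of Theorem \ref{theorem:main2} verified, and $\CC$ strongly connected by assumption, part (2) of that theorem applies and shows that the given functor $F:\CC\to\DD$ is isomorphic to a constant functor. I do not anticipate any real obstacle here, as this is a direct specialization of the theorem; the only genuine care required is the set-theoretic bookkeeping in the first step, where essential smallness rather than bare smallness must be used to produce the bounding cardinal.
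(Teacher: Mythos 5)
Your proposal is correct and is exactly the argument the paper intends: the corollary is stated as an immediate specialization of Theorem \ref{theorem:main2}, obtained by taking $\kappa$ to be the cardinality of the morphism set of a small model of $\DD$ (so that all hom-sets of $\DD$ are bounded by $\kappa$) and noting that small products in $\CC$ include products indexed by sets of cardinality $\kappa$. Your care about extracting the bounding cardinal from the small model rather than from $\DD$ itself is the right set-theoretic point, and nothing further is needed.
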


\begin{remark} There are dual versions of all of the above statements in this section, where products are replaced by coproducts. 
\end{remark}

\begin{corollary} Let $\CC$ be one of the following categories:
\begin{itemize}
\item the category of non-empty sets;
\item the category of pointed sets;
\item the category of groups;
\item the category of modules over some ring;
\end{itemize}
and let $\DD$ be one of the following categories
\begin{itemize}
\item finite sets;
\item finite groups;
\item countable groups;
\item finitely generated modules over a ring $R.$
\end{itemize}
Then any functor from $\CC$ to $\DD$ is isomorphic to a constant functor.
\end{corollary}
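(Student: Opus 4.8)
The plan is to reduce everything to Corollary \ref{cor: constant}, which already yields the conclusion for any strongly connected $\CC$ with small products and any essentially small $\DD$. Thus the whole task becomes a verification of two properties: that each of the four listed source categories is strongly connected and admits all small products, and that each of the four listed target categories is essentially small. Once both are confirmed, applying Corollary \ref{cor: constant} to an arbitrary functor $F:\CC\to\DD$ drawn from the two lists finishes the argument.

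First I would dispose of the source side, where both properties are formal. Strong connectedness is witnessed in each case by a canonical ``constant'' morphism between any pair of objects: the map sending every element to a chosen point for non-empty sets (using only non-emptiness of the target to select a value), the constant map to the basepoint for pointed sets, and the trivial (respectively zero) homomorphism for groups and for $R$-modules. Small products are in every case the usual set-theoretic or direct products, which remain inside the category; in particular an arbitrary product of non-empty sets is non-empty, and the empty product is the terminal object (a singleton, a one-point pointed set, the trivial group, or the zero module). Hence all four sources satisfy the hypotheses imposed on $\CC$.

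Next I would verify essential smallness of the targets, which reduces to showing that the isomorphism classes of objects form a set, the hom-sets being automatically sets in these concrete categories. For finite sets and finite groups this is immediate: isomorphism classes are indexed by a natural number and by the finitely many multiplication tables on $\{1,\dots,n\}$, respectively. For countable groups, every object is isomorphic to a group whose underlying set is a subset of $\mathbb{N}$, and the admissible group operations form a subset of the functions $\mathbb{N}^2\to\mathbb{N}$, hence a set. For finitely generated $R$-modules, every object is a quotient $R^n/K$ with $K$ a submodule of $R^n$, and the submodules of $R^n$ form a set; so again there is only a set of isomorphism classes.

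The only genuinely non-formal step is this last essential-smallness verification for countable groups and for finitely generated modules, where one must bound the size of the collection of isomorphism classes by exhibiting each object as a structure on a fixed underlying set (a subset of $\mathbb{N}$, or a quotient of $R^n$). The strong-connectedness and product properties of the sources, by contrast, are purely formal consequences of the existence of constant or zero morphisms together with direct products. With both sides checked, Corollary \ref{cor: constant} gives that every such $F$ is isomorphic to a constant functor.
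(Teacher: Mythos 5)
Your proposal is correct and follows exactly the route the paper intends: the corollary is stated without proof as an immediate application of Corollary \ref{cor: constant}, and your argument supplies precisely the verification left implicit there (each source is strongly connected with small products, each target is essentially small). Your checks — including the use of choice for products of non-empty sets and the skeleton-style bounds on isomorphism classes of countable groups and finitely generated modules — are all sound.
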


\begin{example} 
Denote by $O$ the category, whose objects are the ordinals and morphisms are canonical embeddings. So $O$ is a ``big poset,'' in fact  a ``big well ordered set.'' Then $O$ has small products defined by the infimum and small coproducts defined by the supremum. However, $O$ is not strongly connected, and for any ordinal $\alpha>0$ there is a non-constant functor to the small sub-category $\alpha+1$ of $O:$
\begin{equation}
    F : O \to \alpha+1, \hspace{1cm} F(\beta):={\sf min}(\beta,\alpha).
\end{equation}
Therefore, the assumption of being strongly  connected in Corollary \ref{cor: constant} is essential. 
\end{example}

\begin{proposition}\label{contabletofggroups}
Let ${\sf Gr}^{\sf count}$ be the category of countable groups and ${\sf Gr}^{\sf f.g.}$ be the category of finitely generated groups. Any functor ${\sf Gr}^{\sf count}\to {\sf Gr}^{\sf f.g.}$ is isomorphic to a constant functor.   
\end{proposition}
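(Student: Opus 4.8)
The plan is to deduce this from Theorem \ref{theorem:main2} rather than from Corollary \ref{cor: constant}, because ${\sf Gr}^{\sf count}$ fails to have all small products: the product of countably many copies of $\ZZ$ already has cardinality $2^{\aleph_0}$ and is not a countable group. This is exactly the obstacle, and the way around it is to work with coproducts (free products) instead of products and to invoke the cardinal-indexed theorem with the specific value $\kappa=\aleph_0$.

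First I would pass to the dual setting promised in the Remark, or equivalently apply Theorem \ref{theorem:main2} directly to the opposite functor $F^{\sf op}:({\sf Gr}^{\sf count})^{\sf op}\to ({\sf Gr}^{\sf f.g.})^{\sf op}$ with $\kappa=\aleph_0$. Under this dualization the hypothesis that $\CC$ has products indexed by sets of cardinality $\kappa$ becomes the statement that ${\sf Gr}^{\sf count}$ has coproducts indexed by countable sets, and coproducts in the category of groups are free products. The point that makes the argument go through is that the free product $\bigast_{\alpha\in A} G_\alpha$ of a countable family of countable groups is again countable, so these coproducts genuinely exist inside ${\sf Gr}^{\sf count}$. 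By contrast, free products of uncountably many nontrivial groups leave the category, which is precisely why one must cap the index cardinality at $\kappa=\aleph_0$ and cannot simply appeal to the dual of Corollary \ref{cor: constant}.

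Next I would check the remaining hypotheses of Theorem \ref{theorem:main2} for the target. The category ${\sf Gr}^{\sf f.g.}$ is essentially small: every finitely generated group is a quotient $F_n/N$ of a finitely generated free group, the pairs $(n,N)$ range over a set, so there is a set of isomorphism classes, and passing to a skeleton yields a small category. Its hom-sets are countable, since a homomorphism out of a $k$-generated group is determined by the images of its $k$ generators in a countable group, whence $|{\sf Gr}^{\sf f.g.}(G,H)|\leq |H|^{\,k}\leq \aleph_0$. Both properties are inherited by the opposite category and match the requirements that $\DD$ be essentially small with $|\DD(d',d)|\leq\kappa$. Finally, ${\sf Gr}^{\sf count}$ is strongly connected because any two groups admit the trivial homomorphism, and strong connectivity is self-dual.

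With every hypothesis in place, Theorem \ref{theorem:main2}(2) applied to $F^{\sf op}$ shows that $F^{\sf op}$ is isomorphic to a constant functor, and dualizing the natural isomorphism back shows that $F$ itself is isomorphic to a constant functor. The only genuine subtlety I expect is the cardinality bookkeeping hidden in the dual of Proposition \ref{proposition:main1}(1): one must confirm that the driving contradiction $2^{\aleph_0}>\aleph_0$ survives once products are replaced by free-product coproducts of size $\aleph_0$, which is immediate from Cantor's theorem. Everything else is routine verification.
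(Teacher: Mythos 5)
Your proof is correct, and at its core it runs on the same engine as the paper's: dualize so that free products play the role of products, set $\kappa=\aleph_0$, and exploit that countable free products of countable groups are countable while hom-sets between finitely generated groups are countable. The difference is which intermediate result you invoke, and it is not purely cosmetic. The paper applies the dual of Proposition~\ref{proposition:main1} with $\CC={\sf Gr}^{\sf count}$ and $\DD={\sf Gr}^{\sf f.g.}$; but that proposition, as stated, asks that \emph{both} categories have (co)products indexed by sets of cardinality $\kappa$, and ${\sf Gr}^{\sf f.g.}$ has no countable coproducts: a coproduct $C$ of countably many copies of $\ZZ$ in ${\sf Gr}^{\sf f.g.}$ would yield, by the universal property, a bijection between the countable set of homomorphisms $C\to\ZZ$ and the uncountable set $\ZZ^{\mathbb{N}}$. (The underlying argument survives, because the factorization in Lemma~\ref{lemma:commutative_diagram} really only needs the coproduct in the source category, but the proposition as literally quoted does not apply.) Your route through the dual of Theorem~\ref{theorem:main2} trades the (co)completeness hypothesis on the target for essential smallness plus the hom-set bound, and you verify both: the skeleton of quotients $F_n/N$ shows ${\sf Gr}^{\sf f.g.}$ is essentially small, and finite generation bounds each hom-set by $\aleph_0$. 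So your proof is, if anything, the more careful application of the paper's machinery; the extra verification of essential smallness is exactly what entitles you to ignore the failure of countable coproducts in ${\sf Gr}^{\sf f.g.}$.
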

\begin{proof}
Countable free product of countable groups is countable. Hence ${\sf Gr}^{\sf count}$ has countable coproducts. The hom-sets between finitely generated groups are countable, because any homomorphism is uniquely defined by its values of generators. Therefore, if we take $\kappa=\aleph_0$ and $\CC={\sf Gr}^{\sf count},$ and $\DD={\sf Gr}^{\sf f.g.}$ and use the dual version of the Proposition \ref{proposition:main1}, we obtain that any functor ${\sf Gr}^{\sf count}\to {\sf Gr}^{\sf f.g.}$ is isomorphic to a constant functor. 
\end{proof}

For a cardinal $\kappa$ we denote by ${\sf Sets}^{\leq \kappa}_{\ne \emptyset}$ the category of non-empty sets of cardinality $\leq \kappa$ and by ${\sf Sets}^{\leq \kappa}_*$ the category of pointed sets of cardinality $\leq \kappa.$ Note that the categories ${\sf Sets}^{\leq \kappa}_{\ne \emptyset}$ and ${\sf Sets}^{\leq \kappa}_*$ are strongly connected, while the category of all sets ${\sf Sets}$ is not strongly connected, because there is no  map from non-empty set to the empty set. 

\begin{proposition}\label{setstosmallerset}
Let $\kappa,\kappa'$ be two  cardinals such that $\kappa\geq 2^{\kappa'}.$ Assume that $\kappa$ is infinite. Then all functors
\[
{\sf Sets}^{\leq \kappa}_{\ne \emptyset}\longrightarrow {\sf Sets}^{\leq \kappa'}_{\ne \emptyset}, \hspace{1cm} {\sf Sets}^{\leq \kappa}_{*}\longrightarrow {\sf Sets}^{\leq \kappa'}_{*}
\] 
are isomorphic to constant functors. 
\end{proposition}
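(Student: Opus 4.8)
The plan is to deduce the statement from the dual (coproduct) version of Theorem \ref{theorem:main2} rather than from Theorem \ref{theorem:main2} itself. The point is that the source categories do not admit the $\kappa$-indexed \emph{products} that the product formulation would require: the product of $\kappa$ copies of a two-element set already has cardinality $2^\kappa>\kappa$, so it leaves ${\sf Sets}^{\leq\kappa}$ entirely. Coproducts behave much better, so the coproduct formulation is the one adapted to sets, and it is available by the Remark following Corollary \ref{cor: constant}. I would run the argument uniformly for the pair $\CC={\sf Sets}^{\leq\kappa}_{\ne\emptyset}$, $\DD={\sf Sets}^{\leq\kappa'}_{\ne\emptyset}$ and for the pair $\CC={\sf Sets}^{\leq\kappa}_{*}$, $\DD={\sf Sets}^{\leq\kappa'}_{*}$, with the same cardinal $\kappa$ throughout.

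The steps are to verify the three hypotheses of the dual of Theorem \ref{theorem:main2}. \emph{First}, that $\CC$ has coproducts indexed by sets of cardinality $\kappa$: the coproduct is the disjoint union (respectively the wedge), and for a family $(X_\alpha)_{\alpha\in A}$ with $|A|\leq\kappa$ and each $|X_\alpha|\leq\kappa$ one has $|\coprod_\alpha X_\alpha|=\sum_\alpha|X_\alpha|\leq\kappa\cdot\kappa=\kappa$, using that $\kappa$ is infinite; the result is again non-empty (respectively pointed), so it lies in $\CC$. \emph{Second}, that $\CC$ is strongly connected: between any two objects there is at least the constant map, so every hom-set is non-empty. \emph{Third}, that $\DD$ is essentially small with hom-sets of cardinality $\leq\kappa$: there is only a set of isomorphism classes of sets of cardinality $\leq\kappa'$, and for objects $Y',Y$ of $\DD$ one has $|\DD(Y',Y)|\leq(\kappa')^{\kappa'}$, which is finite when $\kappa'$ is finite and equals $2^{\kappa'}$ when $\kappa'$ is infinite; in either case it is $\leq\kappa$, using the hypothesis $\kappa\geq 2^{\kappa'}$ together with the infinitude of $\kappa$. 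With these three facts the dual of Theorem \ref{theorem:main2}(2) immediately gives that every functor $\CC\to\DD$ is isomorphic to a constant functor.

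The only delicate point — and the one the hypotheses are designed around — is the cardinal bookkeeping in the first and third steps. The mechanism behind the dual lemma is that the map $\prod_\alpha\CC(c,c')\to\prod_\alpha\DD(Fc,Fc')$ factors through $\DD(F(\coprod_\alpha c),Fc')$; taking $\kappa$ copies of a fixed $c$ with $|c|\geq 2$ and two distinct parallel arrows forces the image to have size $2^\kappa$, while the mediating hom-set has size $\leq\kappa$, a contradiction by Cantor. What makes this work for sets is precisely the asymmetry I flagged at the outset: the object $\coprod_{\alpha\in\kappa}c$ has cardinality only $\kappa$ and so stays inside $\CC$, even though the number of independent choices it represents is $2^\kappa$; the analogous product $c^\kappa$ would already have cardinality $2^\kappa$ and fall outside the category. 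Thus passing to coproducts sidesteps the boundary case $\kappa=2^{\kappa'}$ for free, and the bound $\kappa\geq 2^{\kappa'}$ is needed only to keep the target's hom-sets within the cardinal that governs the coproducts. Once these two inequalities are checked, nothing further is required beyond the dual of Theorem \ref{theorem:main2}.
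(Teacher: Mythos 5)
Your proof is correct, and the substance is the same as the paper's: both arguments verify that the source category is strongly connected and closed under $\kappa$-indexed disjoint unions (respectively wedges) using $\kappa\cdot\kappa=\kappa$, bound the target's hom-sets by $(\kappa')^{\kappa'}=2^{\kappa'}\leq\kappa$ via the same classical cardinal identity, and then invoke the dual (coproduct) form of the general machinery. The one divergence is which black box you cite: the paper applies the dual of Proposition \ref{proposition:main1} directly, whereas you apply the dual of Theorem \ref{theorem:main2}. Your choice is in fact the more watertight one. The dual of Proposition \ref{proposition:main1} literally requires \emph{both} $\CC$ and $\DD$ to have $\kappa$-indexed coproducts, and ${\sf Sets}^{\leq\kappa'}_{\ne\emptyset}$ does not have them (e.g.\ a coproduct of $\kappa$ copies of a two-element set cannot exist in ${\sf Sets}^{\leq 2}_{\ne\emptyset}$: a universal object would need at least $2^{\kappa}$ distinct maps out of a set of size at most $2$); so the paper's citation only goes through because the hypothesis on $\DD$ is never actually used in the proof of that proposition. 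Theorem \ref{theorem:main2} replaces that hypothesis with essential smallness of $\DD$ plus the hom-set bound, both of which you check and which genuinely hold here, so your invocation matches its stated hypotheses exactly. Your closing remarks about why products fail and why the bound $\kappa\geq 2^{\kappa'}$ suffices even at equality are also accurate.
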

\begin{proof} We prove the statement for the category of non-empty sets, the case of pointed sets is similar. Since $\kappa$ is infinite, we have $\kappa\times \kappa = \kappa,$ and hence, 
the disjoint union of non-empty sets of cardinality $\leq \kappa$ indexed by a set of cardinality $\kappa$ has cardinality $\leq \kappa.$ Hence ${\sf Sets}^{\leq \kappa}_{\ne \emptyset}$ is strongly connected and has coproducts indexed by sets of cardinality $\kappa.$ We claim that for any sets $X,Y$ of cardinality $\leq \kappa'$ the set of functions $X^Y$ has cardinality $\leq \kappa.$ If $\kappa'$ is finite, it is obvious. If $\kappa'$ is infinite, then it is well known that $(\kappa')^{\kappa'}=2^{\kappa'}$ (see  \cite[XV.2]{sierpinski1958cardinal}, \cite[Ch.2, Exercise 4]{kaplansky2020set}) and hence $|X^Y|\leq 2^{\kappa'} \leq \kappa.$ The assertion follows from Proposition \ref{proposition:main1}.
\end{proof}

\begin{example}[Functors from sets]
The category of all sets ${\sf Sets}$ is not strongly connected, because there is no map from a non-empty set to the empty set. So we can't use Proposition  \ref{proposition:main1} and Theorem \ref{theorem:main2} for this category. Indeed, for any  category $\DD$ and any its morphism $\varphi:d'\to d$ there is a functor
\[ F_\varphi : {\sf Sets} \longrightarrow \DD\]
such that 
\[ 
F_\varphi(X) = 
\begin{cases}
d', & X=\emptyset \\
d, & X\ne \emptyset 
\end{cases} , 
\hspace{1cm}
F_\varphi(f:X\to Y) =
\begin{cases}
\varphi, & X=\emptyset,\  Y\ne \emptyset \\ 
1_{d'}, & X=Y=\emptyset \\ 
1_{d}, & X\ne \emptyset,\ Y\ne \emptyset 
\end{cases}
.
\]
If $\DD$ is small, using the fact that all functors ${\sf Sets}_{\ne \emptyset}\to \DD$ are isomorphic to constant functors, it is easy to check that all functors ${\sf Sets}\to \DD$ are isomorphic to functors of the form $F_\varphi.$
\end{example}

Note, however, that not all functors from "larger sets" to "smaller sets" are constant.

\begin{example}
Here for a natural number $n\geq 2$ we give an example of a functor 
\begin{equation}
F : {\sf Sets}^{\leq n}_{\ne \emptyset} \longrightarrow {\sf Sets}^{\leq 2}_{\ne \emptyset}
\end{equation}
which is not isomorphic to a constant functor. 
There are two types of morphisms in ${\sf Sets}^{\leq n}_{\ne \emptyset}$: (0) functions with the image of cardinality $<n;$ (1)  bijections between sets of cardinality $n.$ A composition of a morphism of type (0) with any other morphism from any side is a morphism of type (0). The composition of two morphisms of type (1) is a morphism of type (1). Denote by $f_0:\{0,1\}\to \{0,1\}$ the map sending all elements to zero $f_0(x)=0.$ We define $F$ so that $F(X)=\{0,1\}$ for any set $X;$ for any morphism $f$ of type (0) we set $F(f)=f_0;$ for any morphism $f$ of type (1) we set $F(f)={\sf id}_{\{0,1\}}.$ It is easy to check that $F$ is a well-defined functor non-isomorphic to a constant functor. 
\end{example}

\section{Subfunctors and quotients of the identity functor}

\subsection{Subfunctors of the identity functor of the category of groups} 
 Throughout this subsection, we denote by $F$ a given sub-functor of the identity functor on the category of groups
\begin{equation}
 F\subseteq \Id_{\sf Gr}.   
\end{equation}

A subgroup $H$ of a group $G$ is called \emph{characteristic} (denoted by $H\triangleleft_{\sf ch} G $) if for any automorphism $\varphi: G\to G$ we have $\varphi(H)\subseteq H.$ Any characteristic subgroup is normal because it is invariant with respect to inner automorphisms. An advantage of characteristic subgroups over normal subgroups is that $K\triangleleft_{\sf ch} H $ and 
$ H \triangleleft_{\sf ch} G$ implies $K\triangleleft_{\sf ch} G.$ Note that the injectivity assumption on $F$  namely, $FG\subseteq G,$ implies directly that $F$ preserves injectivity of  maps.

\begin{lemma}\label{lemma:subfunctor} \ 
\begin{enumerate}
\item $F(G) \triangleleft_{\sf ch} G$ for any $G;$
\item for any $H\triangleleft_{\sf ch} F(G)$ we have 
\[
F(G)/H \subseteq F(G/H);
\]
\item for any family of groups $(G_\alpha)_{\alpha\in A}$ the standard assembly map is injective: 
\[\bigast_{\alpha\in A} F(G_\alpha) \subseteq F(\bigast_{\alpha\in A} G_\alpha).\]
\item for any family of groups $(G_\alpha)_{\alpha\in A}$ the standard assembly map is injective: 
\[F(\prod_{\alpha\in A} G_\alpha) \subseteq \prod_{\alpha\in A} F(G_\alpha).\]
\end{enumerate}
\end{lemma}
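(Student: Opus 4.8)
The plan is to handle the four parts in turn; each reduces to functoriality of $F$ (which, since $F\subseteq \Id_{\sf Gr}$, means that $F(\psi)$ is always the restriction of $\psi$ to the relevant subgroup) together with a single standard fact, and I expect only part (3) to require real input.

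For part (1), I would apply $F$ to an arbitrary automorphism $\varphi:G\to G$. As $F(\varphi)$ is the restriction of $\varphi$ and lands in $F(G)$, we get $\varphi(F(G))\subseteq F(G)$, which is precisely the definition of $F(G)\triangleleft_{\sf ch} G$ (applying the same reasoning to $\varphi^{-1}$ even yields equality, but the inclusion suffices). For part (2), first observe that transitivity of the characteristic relation, combined with part (1), gives $H\triangleleft_{\sf ch} G$, so that $G/H$ is defined, and in particular $H\subseteq F(G)$. Applying $F$ to the quotient map $q:G\to G/H$ yields $q(F(G))\subseteq F(G/H)$ by functoriality; since $H\subseteq F(G)$, the image $q(F(G))$ is exactly the subgroup $F(G)/H$ of $G/H$, and the desired inclusion $F(G)/H\subseteq F(G/H)$ follows.

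Part (3) is the step I expect to be the main obstacle. Here I would compose the assembly map $\bigast_{\alpha} F(G_\alpha)\to F(\bigast_{\alpha} G_\alpha)$ with the inclusion $F(\bigast_{\alpha} G_\alpha)\hookrightarrow \bigast_{\alpha} G_\alpha$. Restricted to the factor $F(G_\alpha)$, this composite is $F$ applied to the canonical inclusion $G_\alpha\to \bigast_\beta G_\beta$ followed by the inclusion into the full free product, i.e. it is simply $F(G_\alpha)\hookrightarrow G_\alpha\hookrightarrow \bigast_\beta G_\beta$. Hence the composite is exactly the free product of the inclusions $F(G_\alpha)\hookrightarrow G_\alpha$. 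The crucial ingredient is then the classical normal-form theorem for free products: a free product of injective homomorphisms is injective, because a nontrivial reduced word is carried to a nontrivial reduced word (each syllable stays in its factor and stays nontrivial). Thus the composite, and therefore the assembly map itself, is injective.

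Finally, part (4) is essentially formal. The product assembly map $v^{\bf c}:F(\prod_\alpha G_\alpha)\to \prod_\alpha F(G_\alpha)$ is determined by $p^{F({\bf c})}_\beta\circ v^{\bf c}=F(p^{\bf c}_\beta)$, and $F(p^{\bf c}_\beta)$ is the restriction of the projection $p^{\bf c}_\beta$ to $F(\prod_\alpha G_\alpha)\subseteq \prod_\alpha G_\alpha$. Consequently $v^{\bf c}$ sends an element to the tuple of its coordinates, so its composite with $\prod_\alpha F(G_\alpha)\hookrightarrow \prod_\alpha G_\alpha$ is just the inclusion $F(\prod_\alpha G_\alpha)\hookrightarrow \prod_\alpha G_\alpha$; being the restriction of an injective map, $v^{\bf c}$ is injective.
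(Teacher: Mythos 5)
Your proof is correct and follows essentially the same route as the paper's: restriction of automorphisms for (1), transitivity of the characteristic relation plus naturality of the quotient map for (2), and the observation that the assembly maps composed with the inclusions into $\bigast_{\alpha} G_\alpha$ and $\prod_{\alpha} G_\alpha$ are the canonical injections for (3) and (4). The only difference is one of detail: you make explicit the normal-form argument (a free product of injections is injective) that the paper compresses into ``the assertion follows.''
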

\begin{proof}
(1) For any automorphism $\varphi$ of $G$ the map $F(\varphi)$ is an automorphism of $F(G).$ Since $F(\varphi)$ is a restriction of $\varphi$ to a subgroup, we obtain $\varphi(F(G))\subseteq F(G).$

(2) Since $H\triangleleft_{\sf ch} F(G)$ and $F(G)\triangleleft_{\sf ch} G,$ we obtain that $H$ is normal in $G.$ Hence we can consider the projection $p:G\to G/H.$ By the naturality of $F$ we obtain that $F(G/H)$ contains $p(F(G))=F(G)/H.$ 

(3) The group $\bigast_{\alpha\in A} F(G_\alpha)$ is the subgroup of $\bigast_{\alpha\in A} G_\alpha$ generated by the images of $F(G_\alpha).$ The assertion follows.

(4) For any projection $p_\beta: \prod_{\alpha\in A} G_\alpha \to G_\beta$ we have $p_\beta(F(\prod_{\alpha\in A} G_\alpha)) \subseteq F( G_\beta ).$ The assertion follows. 
\end{proof}

\begin{corollary}[{cf. \cite[Prop. 3.1, Prop 3.1*]{huq1969semivarieties}}]
\label{cor:classes} \ 
\begin{enumerate}
\item The class of groups $G$ such that $F(G)=G$ is closed with respect free products and taking quotients.
\item The class of groups $G$ such that $F(G)=0$  is closed with respect products and taking subgroups.
\end{enumerate}
\end{corollary}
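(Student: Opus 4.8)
The plan is to derive all four closure properties directly from the naturality of the inclusion $F\subseteq \Id_{\sf Gr}$ together with the assembly inequalities of Lemma \ref{lemma:subfunctor}. In every case the point is that $F$ being a subfunctor of the identity provides two ingredients: the containment $F(G)\subseteq G$ for every $G$, and, for every homomorphism $f:G\to G'$, a commuting naturality square identifying $F(f)$ with the restriction of $f$. Combining these with the appropriate part of Lemma \ref{lemma:subfunctor} collapses each two-sided containment to an equality.

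For part (1), closure under free products: given groups $G_\alpha$ with $F(G_\alpha)=G_\alpha$, I would invoke Lemma \ref{lemma:subfunctor}(3) to obtain $\bigast_{\alpha} G_\alpha=\bigast_{\alpha} F(G_\alpha)\subseteq F(\bigast_{\alpha} G_\alpha)$, and then use $F(\bigast_{\alpha} G_\alpha)\subseteq \bigast_{\alpha} G_\alpha$ (since $F$ sits inside the identity) to force equality. For closure under quotients I would \emph{not} use Lemma \ref{lemma:subfunctor}(2)---which only governs characteristic subgroups of $F(G)$---but rather the naturality square for an arbitrary surjection $p:G\twoheadrightarrow G/N$. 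Naturality gives $p(F(G))\subseteq F(G/N)$, and when $F(G)=G$ the surjectivity of $p$ yields $G/N=p(G)=p(F(G))\subseteq F(G/N)\subseteq G/N$, hence equality.

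For part (2), closure under products is immediate from Lemma \ref{lemma:subfunctor}(4): if each $F(G_\alpha)=0$, then $F(\prod_{\alpha} G_\alpha)\subseteq \prod_{\alpha} F(G_\alpha)=0$, so the assembly map forces $F(\prod_{\alpha} G_\alpha)=0$. Closure under subgroups uses the naturality square for an inclusion $i:H\hookrightarrow G$ together with the remark recorded above that $F$ preserves injectivity: naturality gives $i(F(H))\subseteq F(G)=0$, and the injectivity of $i$ forces $F(H)=0$.

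I do not anticipate a genuine obstacle, as each item is a one-line consequence of a part of Lemma \ref{lemma:subfunctor} or of bare naturality. The only point deserving care is the quotient case, where one must argue through the full naturality square for a general normal subgroup $N\triangleleft G$ rather than through Lemma \ref{lemma:subfunctor}(2), whose hypothesis $H\triangleleft_{\sf ch} F(G)$ is strictly stronger than normality of the kernel in $G$.
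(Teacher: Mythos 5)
Your proof is correct and takes essentially the approach the paper intends: the corollary is stated without proof as an immediate consequence of Lemma \ref{lemma:subfunctor} (parts (3) and (4)) combined with the containment $F(G)\subseteq G$ and bare naturality, exactly as you argue. Your caution about the quotient case is well placed and consistent with the paper: Lemma \ref{lemma:subfunctor}(2) only covers characteristic subgroups, but the naturality inclusion $p(F(G))\subseteq F(G/N)$ --- which is also the engine inside the paper's proof of that lemma --- handles an arbitrary normal subgroup $N\triangleleft G$ once $G/N$ is known to exist.
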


\begin{corollary}\label{cor:Z}
If $F(\ZZ)=\ZZ,$ then $F=\Id_{\sf Gr}.$
\end{corollary}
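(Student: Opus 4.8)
The plan is to leverage the closure properties established in Corollary \ref{cor:classes}(1), which states that the class $\mathcal{F}$ of groups $G$ with $F(G)=G$ is closed under free products and under taking quotients. The hypothesis $F(\ZZ)=\ZZ$ places the infinite cyclic group in $\mathcal{F}$, and the strategy is to propagate membership from this single group to all groups via these two closure operations.

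First I would observe that every free group is a free product of copies of $\ZZ$. Since $\ZZ\in\mathcal{F}$ and $\mathcal{F}$ is closed under free products---including those indexed by arbitrary sets, which is exactly what the arbitrary-index assembly map in Lemma \ref{lemma:subfunctor}(3) delivers---it follows that $F(\Phi)=\Phi$ for every free group $\Phi$.

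Next I would use that every group $G$ is a quotient of some free group $\Phi$. Because $\mathcal{F}$ is closed under quotients, $\Phi\in\mathcal{F}$ forces $G\in\mathcal{F}$, that is $F(G)=G$. As $G$ was arbitrary, this yields $F=\Id_{\sf Gr}$.

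There is no substantial obstacle here; the argument is a direct two-step closure chase---first free products, then quotients---that converts the single datum $F(\ZZ)=\ZZ$ into the full identity. The only point deserving care is that one must allow free products over arbitrary index sets, so that infinitely generated free groups, needed to surject onto groups of arbitrary size, are covered; this is precisely the content of Lemma \ref{lemma:subfunctor}(3).
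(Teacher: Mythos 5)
Your proof is correct and follows exactly the paper's argument: the paper likewise deduces the corollary from Corollary \ref{cor:classes}(1), noting that every free group is a free product of copies of $\ZZ$ and every group is a quotient of a free group. Your additional remark that the free products must be taken over arbitrary index sets (as supplied by Lemma \ref{lemma:subfunctor}(3)) is a sound reading of what the paper's closure statement implicitly uses.
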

\begin{proof}
This follows from Corollary 
\ref{cor:classes}  (1) above. Any free group is a free product of copies of $\ZZ,$ and any group is a quotient of a free group. 
\end{proof}

By the essential image $\IM(F)$ of a functor $F:\CC\to \DD$ we mean the class of objects isomorphic to an object of the form $F(c),c\in {\sf Ob}(\CC).$ 

\begin{theorem}\label{th:simple_image}
Let $F$ be a non-trivial subfunctor of the identity functor on the category of groups, and $G$ be a group. Then there exists a simple group $S$ such that $G\subseteq S$ and $S=FS\in \IM(F).$ 
\end{theorem}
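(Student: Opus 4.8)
The plan is to build the simple group $S$ containing $G$ as an increasing union of groups, arranging at each stage that two things stay true: that $F$ acts as the identity on the groups we construct, and that any two nonidentity elements become conjugate in a later stage (which forces simplicity in the limit). The starting point is the hypothesis that $F$ is non-trivial, which by Corollary \ref{cor:Z} must mean $F(\ZZ) = 0$ would give $F = 0$; more usefully, non-triviality gives some group $G_0$ with $F(G_0) \neq 0$, and I would first extract from this a single nontrivial element living in the essential image. The key leverage is Lemma \ref{lemma:subfunctor}(3): free products interact well with $F$, since $\bigast_\alpha F(G_\alpha) \subseteq F(\bigast_\alpha G_\alpha)$, so I can feed groups on which $F$ is nontrivial into free products and keep $F$ nontrivial.

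First I would reduce the problem to producing, from any group $H$ with $H \subseteq FH$, a larger group $H'$ with $H \subseteq H' \subseteq FH'$ in which any two chosen nonidentity elements of $H$ are conjugate, while ensuring $F(H') = H'$. The classical embedding of a group into a simple group (as in \cite{schupp1976embeddings}) proceeds by iterated HNN extensions or amalgamated free products that make prescribed pairs of elements conjugate; the refinement here is to carry the condition $F(-) = \mathrm{Id}$ along the construction. To initialize, given the arbitrary group $G$ from the statement, I would replace $G$ by $G * K$ where $K$ is a fixed group with $F(K) = K$ lying in the essential image (such a $K$ exists because the class of groups fixed by $F$ is closed under free products and quotients by Corollary \ref{cor:classes}(1), and contains a nontrivial group by non-triviality). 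By Lemma \ref{lemma:subfunctor}(3) the subgroup generated by $K$ inside $F(G * K)$ is all of $K$, and I would aim to show $F(G*K)$ is in fact much larger, ideally pushing toward $F$ fixing the whole ambient group.

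The crucial step, and the main obstacle, is the conjugating move: given $H$ with $FH = H$ and two nonidentity elements $a, b \in H$, I must embed $H$ into an $H'$ with $FH' = H'$ where $a$ and $b$ are conjugate. The natural device is an HNN extension $H' = \langle H, t \mid t a t^{-1} = b\rangle$, or an amalgam that achieves the same. The delicate point is proving $FH' = H'$: I would try to realize $H'$ as a retract or quotient of a free product of copies of fixed groups, so that Corollary \ref{cor:classes}(1) (closure under free products and quotients) applies, or alternatively use Lemma \ref{lemma:subfunctor}(2) to transport the equality $F(-) = \mathrm{Id}$ across the quotient defining the HNN relation. If $H'$ cannot be directly seen as a quotient of a free product of $F$-fixed groups, I would instead embed $H'$ into such a quotient and argue that $F$ fixes the relevant subgroup via the characteristic-subgroup transitivity noted before Lemma \ref{lemma:subfunctor}. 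I expect the verification that each HNN/amalgamation step preserves $FH' = H'$ to be where the real work lies, since simplicity is then a routine consequence of taking the colimit over a construction that eventually conjugates every pair of nonidentity elements.

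Finally, after the transfinite (or countable, if $G$ is appropriately sized) union $S = \bigcup_n H_n$, simplicity follows because any normal subgroup containing a nonidentity element contains, by construction, every element conjugate to it, hence everything; and $F(S) = S$ follows because $F$ commutes with the filtered colimit of injections — here I would invoke that $F$, being a subfunctor of the identity preserving injections, satisfies $F(\mathrm{colim}\, H_n) \supseteq \mathrm{colim}\, F(H_n) = \mathrm{colim}\, H_n = S$, giving $FS = S$ and placing $S$ in $\IM(F)$.
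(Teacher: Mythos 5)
Your proposal misses the paper's key idea and, as a result, leaves its central step not only unproven but unprovable in the form you state it. The paper's proof never opens up the Schupp construction at all: it uses the classical embedding theorem as a black box together with the one fact you never invoke, namely that $F(S)$ is a characteristic, hence normal, subgroup of $S$ (Lemma \ref{lemma:subfunctor}(1)), so that for a \emph{simple} group $S$ there is a dichotomy $F(S)=1$ or $F(S)=S$. Arguing by contradiction, if some group $B$ had the property that every simple $S\supseteq B$ satisfies $F(S)\neq S$, then every such $S$ would have $F(S)=1$; embedding $G\times B$ into a simple group $S_G$ and using monotonicity $F(G)\subseteq F(S_G)$ then forces $F(G)=1$ for every $G$, contradicting non-triviality. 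Normality plus simplicity plus monotonicity do all the work; no tower, no HNN bookkeeping.

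The concrete gaps in your construction are these. First, your justification for the existence of a nontrivial $K$ with $F(K)=K$ is a non sequitur: Corollary \ref{cor:classes}(1) says the class of $F$-fixed groups is closed under free products and quotients, but the class $\{1\}$ has those closure properties too, so closure alone produces no nontrivial member; the only available proof that a nontrivial $F$-fixed group exists is essentially the paper's argument itself. Second, the hope that $F(G*K)=G*K$ is false in general: for $F(G)=[G,G]$ and $K$ perfect, $F(G*K)$ is the kernel of $G*K\to G^{\mathrm{ab}}$, a proper subgroup whenever $G^{\mathrm{ab}}\neq 1$. Third, and most seriously, the HNN step you identify as ``where the real work lies'' fails outright for some $F$: take $F$ to be the perfect core (a non-trivial subfunctor of the identity, for which $F$-fixed means perfect). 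If $H$ is perfect and $H'=\langle H,t\mid tat^{-1}=b\rangle$, then $H'^{\mathrm{ab}}\cong\ZZ$, so $H'$ is never $F$-fixed; moreover $H'$ cannot be a quotient of any free product of $F$-fixed groups, since such quotients are perfect. Your other fallback, writing $H'$ as a quotient of $H*\ZZ$ and applying Corollary \ref{cor:classes}(1), would require $F(\ZZ)=\ZZ$, which by Corollary \ref{cor:Z} forces $F=\Id$ (and for $F=\Id$ the theorem is just the classical embedding theorem). So the strategy of carrying $F$-fixedness through an explicit tower cannot be completed with the tools you cite; the repair is precisely the dichotomy for simple groups that the paper exploits, which makes the tower unnecessary.
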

\begin{proof} Assume the contrary, that there is a group $B$ such that for any simple group $S$ containing $B,$ we have $F(S)\neq S.$ Since $F(S)$ is a normal subgroup (Lemma \ref{lemma:subfunctor}), we have $F(S)=1.$ 
Every group can be embedded into a simple group \cite{schupp1976embeddings}. Given a group $G,$  
take the product $G\times B$ and consider a simple group $S_G$ containing $G\times B.$ Then we have  $F(S_G)=1.$   Combining this with the inclusion $F(G)\subseteq F(S_G),$ which is implied by functoriality, we obtain $F(G)=1$ for any $G,$  contradicting the assumption. 
\end{proof}

\begin{remark} This implies, in particular, that for any functor $F$ as above, any group  $G$ can be embedded in a group of the form $FH,$ for some group $H.$
\end{remark}

\smallbreak

Next, let us consider augmented functors $FG\to G,$ taking values in any {\em proper reflective subcategory} of groups. A full subcategory $\DD$ of a category $\CC$ is called reflective, if for any object of $c$ of $\CC$ there is a universal arrow $c\to d$ to an object of $\DD.$ In other words, a full subcategory $\DD\subseteq \CC$ is reflective, if the functor of embedding $\DD \hookrightarrow \CC$ has a left adjoint functor $ L:\CC\to \DD$ called reflector.  We say that a reflective subcategory $\DD$ is  proper, if it is isomorphism-closed and not equal to $\CC.$ 

\begin{example}[Varieties of groups]
A variety of groups is a class of all groups whose elements satisfy a fixed system of identity relations. It is also well known that a variety of groups can be defined as a full subcategory of the category of groups that is closed under small products, taking images and taking subgroups. A variety is called proper if it is not equal to the category of all groups. Examples of proper varieties of groups are abelian groups, nilpotent groups of class $n$, metabelian groups, $n$-Burnside groups (i.e. groups $G$ satisfying $g^n=1$ for all elements $g\in G$). A proper variety of groups is a proper reflective subcategory in the category of groups.      
\end{example}

\begin{example}[$f$-local groups] Let $f:A\to B$ be a homomorphism of groups. We say that a group $G$ is $f$-local, if $f$ induces a bijection $f^*: {\sf Hom}(B,G) \overset{\cong}\to  {\sf Hom}(A,G)$ (see \cite{farjoun1996cellular}, \cite{libman2000cardinality}, \cite{akhtiamov2021right}). A homomorphism $f:A\to B$ is called local, if $B$ is $f$-local. For any local homomorphism $f,$ the full subcategory of $f$-local groups $\RR_f$ is reflective \cite[Cor.1.7]{casacuberta1992orthogonal}. Moreover, the reflector $L_f$ satisfies the property $L_f A\cong B.$ In particular, if $f$ is not an isomorphism, $\RR_f$ is a proper reflective subcategory.  
\end{example}

\begin{example}[Uniquely divisible groups]
A group $G$ is called uniquely divisible, if for any $g\in G$ and any integer $n$ there exists a unique $h\in G$ such that $h^n=g.$ Equivalently, uniquely divisible groups can be defined as groups local with respect to the homomorphism $\ZZ\hookrightarrow \QQ.$ In particular, we obtain that the class of uniquely divisible groups forms a proper reflective subcategory. 
\end{example}

\begin{lemma}\label{lemma:monoreflective}
Let $\RR$ be a proper reflective subcategory of the category of groups with a reflector $L.$ Then the kernel of the unit 
\begin{equation}
KG={\sf Ker}(\eta_G: G \to LG)
\end{equation}
is a non-trivial subfunctor of the identity functor. 
\end{lemma}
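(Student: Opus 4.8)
# Proof Proposal

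The plan is to establish that $KG = \mathrm{Ker}(\eta_G : G \to LG)$ defines a functor, that it is a subfunctor of the identity (i.e. $KG \subseteq G$), and finally that it is non-trivial. The functoriality and the subfunctor property should be essentially formal, coming from the naturality of the unit $\eta$; the genuine content will be the non-triviality, where the properness of $\RR$ must be used in an essential way.

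First I would verify functoriality. For any homomorphism $f : G \to G'$, naturality of the unit gives a commutative square relating $\eta_G$ and $\eta_{G'}$ via $f$ and $Lf$. Because $\eta_{G'} \circ f = Lf \circ \eta_G$, any element of $KG = \mathrm{Ker}(\eta_G)$ is sent by $f$ into $\mathrm{Ker}(\eta_{G'}) = KG'$, so $f$ restricts to a map $KG \to KG'$. This assignment is clearly compatible with identities and composition, so $K$ is a functor, and since $KG$ is by construction a subgroup of $G$ with the restriction maps being the actual restrictions of the ambient maps, the inclusions $KG \hookrightarrow G$ assemble into a natural transformation $K \hookrightarrow \Id_{\sf Gr}$. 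Thus $K \subseteq \Id_{\sf Gr}$ is a subfunctor.

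The main obstacle is non-triviality: I must produce a single group $G$ with $KG \neq 1$, i.e. a group for which $\eta_G$ is not injective. Here is where properness enters. Since $\RR$ is proper, it is not the whole category of groups, so there exists some group $W \notin \RR$. The key point is that $\RR$, being reflective, is closed under isomorphism, so $W$ cannot be isomorphic to any object of $\RR$; in particular $W$ is not isomorphic to $LW$. I would argue that if $\eta_G$ were injective for \emph{every} group $G$, one could derive a contradiction with the existence of such a $W$. The cleanest route is to observe that if $\eta_W$ is injective, then $W$ embeds into the $\RR$-object $LW$; and more, that the universal property forces $W$ itself to already lie in $\RR$ in the relevant cases. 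The delicate step is ruling out the possibility that every unit is a (proper) injection while $\RR$ remains proper — one must show that injectivity of all units would force $\RR = {\sf Gr}$.

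To close this gap I would exploit the universal property of reflection directly. If $\eta_G$ is a monomorphism for all $G$, I would examine a group $G$ built to detect the failure of $G \in \RR$: concretely, reflectivity means $\eta_G$ is the universal map to $\RR$, so for the witness $W \notin \RR$ I expect $\eta_W$ to have nontrivial kernel, since otherwise the universal arrow would exhibit $W$ as a subobject of an $\RR$-object in a way incompatible with $W \notin \RR$ unless $\RR$ already contained $W$. The precise argument will likely use that reflective subcategories of groups are closed under subgroups or products in the relevant cases (as for varieties and local classes, cf. the preceding examples), allowing me to conclude that universal injectivity propagates membership in $\RR$ upward and contradicts properness. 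I anticipate that the sharp version of the argument reduces, via a suitably chosen test group, to the statement that a reflector which never kills anything must be the identity reflector, whence $\RR = {\sf Gr}$ — contradicting that $\RR$ is proper. Therefore some $\eta_G$ has nontrivial kernel and $K$ is a non-trivial subfunctor.
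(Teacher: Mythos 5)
Your first paragraph (functoriality and the subfunctor structure of $K$) is correct and matches the paper. But the non-triviality claim --- which is the entire mathematical content of the lemma --- is never actually proved: the phrases ``I would argue,'' ``I expect,'' ``will likely use,'' and ``I anticipate'' mark exactly where the proof is missing. Moreover, both routes you gesture at are flawed. It is false that injectivity of a single unit $\eta_W$ forces $W\in\RR$: for the reflective subcategory of uniquely divisible groups (one of the paper's own examples), the unit $\eta_{\ZZ}:\ZZ\to\QQ$ is injective, yet $\ZZ\notin\RR$. And general proper reflective subcategories of groups are \emph{not} closed under subgroups (same example: $\ZZ\subseteq\QQ$), so an argument relying on closure under subgroups or products --- which holds for varieties but not in general --- cannot prove the lemma in its stated generality.

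The missing idea, which is precisely how the paper closes the gap you correctly identify (``a reflector which never kills anything must be the identity reflector''), is a two-step argument. Step one is purely formal: if every unit $\eta_A$ is a monomorphism, then every unit $\eta_G$ is an \emph{epimorphism}. Indeed, given $i,i':LG\to A$ with $i\eta_G=i'\eta_G$, post-compose with $\eta_A$ to get $\eta_A i\,\eta_G=\eta_A i'\,\eta_G$; both $\eta_A i$ and $\eta_A i'$ are factorizations of the same map $G\to LA$ through $\eta_G$, so the uniqueness clause of the universal property gives $\eta_A i=\eta_A i'$, and cancelling the mono $\eta_A$ yields $i=i'$. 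Step two is the group-theoretic fact that epimorphisms in ${\sf Gr}$ are surjective. Together these force every unit to be bijective, hence an isomorphism, so every group lies in the isomorphism-closed subcategory $\RR$, i.e. $\RR={\sf Gr}$, contradicting properness. Note that this argument needs no test group and no closure properties beyond isomorphism-closedness; without step one and the epi-implies-surjective fact, your outline cannot be completed.
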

\begin{proof} Let us assume that $KG$ is trivial and prove that $\eta_G$ is an isomorphism. Since epimorphisms in the category of groups are surjective homomorphisms, it is sufficient to prove that $\eta_G$ is an epimorphism. Take two homomorphisms $i,i':LG\to A$ such that $i\eta_G = i' \eta_G.$  By the universal property of $LG,$ we obtain that the equations  $\eta_{A} i \eta_G  = \eta_{A} i' \eta_G$ imply $\eta_A i = \eta_A i'.$ Then using that $\eta_A$ is a monomorphism, we obtain $i=i'.$
\end{proof}

\begin{theorem}\label{th:reflective}
Let $\RR$ be a proper reflective subcategory of the category of groups. Then there is no a non-trivial subfunctor of the identity functor on the category of groups having values in $\RR.$
\end{theorem}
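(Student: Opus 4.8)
\emph{The plan is to argue by contradiction, pitting the hypothetical $F$ against a second subfunctor manufactured from the reflector.} Suppose $\RR$ admits a non-trivial subfunctor $F\subseteq\Id_{\sf Gr}$ with $FG\in\RR$ for every group $G$. Writing $L$ for the reflector and $\eta$ for its unit, set $KG=\Ker(\eta_G:G\to LG)$; by Lemma \ref{lemma:monoreflective} the hypothesis that $\RR$ is proper guarantees that $K$ is again a \emph{non-trivial} subfunctor of the identity. The governing idea is that $F$ and $K$ cannot coexist on one simple group: $F$ insists that any simple group it fails to kill belongs to $\RR$, while $K$ produces a simple group $T$ with $KT=T$, which cannot belong to $\RR$. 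The contradiction arises by forcing both conditions onto one and the same simple group.

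To carry this out I would first fix, using the non-triviality of $F$, a group $G_0$ with $FG_0\neq 1$ (so in particular $G_0\neq 1$). Applying Theorem \ref{th:simple_image} to the non-trivial subfunctor $K$ and the group $G_0$ then yields a simple group $T$ with $G_0\subseteq T$ and $KT=T$. Since $F$ is a subfunctor of the identity, the inclusion $G_0\hookrightarrow T$ restricts to an injection $FG_0\hookrightarrow FT$, so $FT\neq 1$. By Lemma \ref{lemma:subfunctor}(1) the subgroup $FT$ is characteristic, hence normal, in $T$; as $T$ is simple and $FT\neq 1$ this forces $FT=T$, and in particular $T=FT\neq 1$.

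The contradiction is then immediate from the standard behaviour of a reflective unit. Because $FT=T$ and $F$ takes values in $\RR$, the group $T=FT$ lies in $\RR$; since $\RR$ is full and (being proper) isomorphism-closed, the unit $\eta_T:T\to LT$ is an isomorphism, and hence $KT=\Ker(\eta_T)=1$. This contradicts $KT=T\neq 1$, completing the argument. I expect the only genuinely delicate point to be securing $FT\neq 1$: a priori $F$ might vanish on $T$, and it is exactly the embedding strength of Theorem \ref{th:simple_image} — the freedom to take the simple group $T$ to \emph{contain} a prescribed group $G_0$ on which $F$ is already non-trivial — that rules this out. The remaining ingredients (normality of $FT$, the simplicity dichotomy, and the unit being an isomorphism on objects of $\RR$) are routine.
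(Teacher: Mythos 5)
Your proof is correct and takes essentially the same approach as the paper: both arguments pit $F$ against the kernel subfunctor $K$ of Lemma \ref{lemma:monoreflective}, force $FS=S$ and $KS=S$ on a single simple group $S$, and conclude from $KS=S$ that $S\notin\RR$ while $FS=S$ must lie in $\RR$. The only cosmetic difference is that the paper embeds the product $G\times G'$ into a simple group and runs the normality/simplicity dichotomy for both functors, whereas you obtain the simple group by applying Theorem \ref{th:simple_image} to $K$ and run the dichotomy only for $F$.
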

\begin{proof}
Assume that $F$ is a non-trivial subfunctor of the identity functor and prove that its values do not lie in $\RR.$ Let $L$ be the reflector for $\RR$ and $KG$ be the kernel of $\eta_G:G\to LG.$ By Lemma \ref{lemma:monoreflective}, $K$ is also a non-trivial subfunctor of the identity functor. Since $F$ and $K$ are non-trivial, there exist groups $G$ and $G'$ such that $FG$ and $KG'$ are nontrivial.  
 Take a simple group $S$ containing the product $G\times G'.$  The inclusion $ G \subseteq S$ induces an inclusion $FG\subseteq FS.$ Therefore $FS$ is non-trivial, and hence, $FS=S.$ Similarly we obtain $KS=S.$ The equation $KS=S$ implies that $S \notin \RR.$ Therefore $FS \notin \RR,$ which finishes the proof.    
\end{proof}

\begin{corollary}\label{noabeliansubgroups} Let $\mathcal V$ be a proper variety of groups (for example, the variety of abelian groups) and $F:{\sf Gr} \to {\sf Gr}$ be an augmented functor 
\begin{equation}
\varepsilon: F \longrightarrow \Id_{\sf Gr} 
\end{equation}
taking values in $\VV.$ Then $\varepsilon$ is trivial. 
\end{corollary}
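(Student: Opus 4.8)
The plan is to reduce this claim about an augmented functor to Theorem~\ref{th:reflective}, which concerns genuine \emph{subfunctors} of the identity, by replacing $F$ with the image of its augmentation. Concretely, I would set
\[
F'(G) := \varepsilon_G(FG) = \Im(\varepsilon_G) \subseteq G,
\]
so that $F'$ comes equipped with a tautological inclusion $F'(G) \hookrightarrow G$ into $\Id_{\sf Gr}$.

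First I would check that $F'$ is a subfunctor of $\Id_{\sf Gr}$. This is immediate from the naturality of $\varepsilon$: for any homomorphism $f:G\to H$ the naturality square relating $\varepsilon_G$ and $\varepsilon_H$ gives
\[
f(F'G) = f\,\varepsilon_G(FG) = \varepsilon_H\,F(f)(FG) \subseteq \varepsilon_H(FH) = F'H,
\]
so $f$ restricts to a homomorphism $F'G \to F'H$, and these restrictions are functorial (composition of restrictions is the restriction of the composition, and identities go to identities). Hence the inclusions $F'G \hookrightarrow G$ assemble into a natural transformation and $F' \subseteq \Id_{\sf Gr}$.

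Next I would verify that $F'$ takes values in $\VV$. Since $F'G = \varepsilon_G(FG)$ is a homomorphic image of $FG$, i.e. $F'G \cong FG/\Ker(\varepsilon_G)$, and $FG \in \VV$ by hypothesis, the closure of a variety under taking images yields $F'G \in \VV$ for every $G$. Here I use exactly the fact recorded in the Example on varieties, namely that a variety of groups is closed under homomorphic images.

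Finally, a proper variety of groups is a proper reflective subcategory of ${\sf Gr}$, so Theorem~\ref{th:reflective} applies to $F'$ and forces $F'$ to be the trivial subfunctor, i.e. $F'G = \varepsilon_G(FG) = 1$ for all $G$. This says precisely that every component $\varepsilon_G$ is the trivial homomorphism, so $\varepsilon$ is trivial. I do not expect a serious obstacle: the whole content is the observation that passing to the image of $\varepsilon$ converts the augmented-functor problem into a subfunctor problem while staying inside $\VV$, after which Theorem~\ref{th:reflective} does the work. The only point requiring a moment's care is the closure of $\VV$ under the quotient $FG \twoheadrightarrow \varepsilon_G(FG)$, which is guaranteed by the defining properties of a variety.
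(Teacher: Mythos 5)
Your proposal is correct and is essentially identical to the paper's own proof: the paper likewise passes to the image $\Im(\varepsilon)\subseteq \Id_{\sf Gr}$, notes that it stays in $\VV$ because varieties are closed under homomorphic images, and then invokes Theorem~\ref{th:reflective} together with the fact that a proper variety is a proper reflective subcategory. Your write-up simply makes the naturality and functoriality checks explicit.
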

\begin{proof} Since $\VV$ is a variety, the image $\Im (\varepsilon)\subseteq \Id_{\sf Gr}$ also takes values in $\VV.$ Then the result follows from Theorem \ref{th:reflective} and the fact that $\VV$ is a proper  reflective subcategory. 
\end{proof}

\begin{corollary}
There is no a non-trivial subfunctor of the identity functor on the category of groups having values in uniquely divisible groups. 
\end{corollary}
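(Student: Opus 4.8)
The plan is to recognize this corollary as a direct instance of Theorem \ref{th:reflective}, applied to the subcategory $\RR$ of uniquely divisible groups. Since that theorem already rules out non-trivial value-constrained subfunctors for \emph{any} proper reflective subcategory of groups, the entire task reduces to verifying that the uniquely divisible groups do form such a subcategory; no fresh functorial input is required.

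First I would identify $\RR$ as the category $\RR_f$ of $f$-local groups for the inclusion $f:\ZZ\hookrightarrow\QQ$, exactly as recorded in the example on uniquely divisible groups. To invoke the reflectivity established there, one checks that $f$ is a \emph{local} homomorphism, i.e.\ that $\QQ$ is itself $f$-local; this holds because $\QQ$ is torsion-free and divisible, hence uniquely divisible. The $f$-local example then supplies a reflector $L_f$, so $\RR$ is reflective. Next I would confirm that $\RR$ is \emph{proper}: being uniquely divisible is an isomorphism invariant, so $\RR$ is isomorphism-closed, and $\RR\neq{\sf Gr}$ because, for instance, the generator of $\ZZ$ has no square root, so $\ZZ$ is not uniquely divisible (equivalently, $f$ is not an isomorphism). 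With both hypotheses of Theorem \ref{th:reflective} in hand, the conclusion follows at once.

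There is essentially no obstacle to overcome at this stage: all of the substantive work lives upstream, in Theorem \ref{th:simple_image} (every group embeds into a simple group lying in $\IM(F)$) and in Lemma \ref{lemma:monoreflective} (the kernel $\Ker(\eta_G)$ of the unit being a non-trivial subfunctor), which together power Theorem \ref{th:reflective}. The only point that demands any care is the bookkeeping that uniquely divisible groups genuinely constitute a proper reflective subcategory, and this is precisely what the cited examples furnish.
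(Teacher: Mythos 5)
Your proposal is correct and matches the paper's own (implicit) argument exactly: the paper proves this corollary by combining Theorem \ref{th:reflective} with its example showing that uniquely divisible groups are the $f$-local groups for $f:\ZZ\hookrightarrow\QQ$, hence a proper reflective subcategory. Your verification of locality of $f$ and properness of the subcategory just makes explicit the bookkeeping the paper leaves to the reader.
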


\subsection{Quotients of the identity functor of the category of groups}

Let us now briefly consider  conditions that any quotient of the identity functor on groups  $G\twoheadrightarrow Q(G)$ must satisfy.

\begin{theorem}\label{th:gr:quotient}
Let $Q$ be a non-trivial quotient of the identity functor on the category of groups. Then $\IM(Q)$ contains either a non-trivial finite cyclic group, or all free abelian groups.
\end{theorem}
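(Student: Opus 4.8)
The plan is to trade the quotient functor $Q$ for a subfunctor of the identity and then reduce everything to the behaviour of $Q$ on the single group $\ZZ$. Writing $\eta:\Id_{\sf Gr}\to Q$ for the defining natural surjection, I would first set $N(G)=\Ker(\eta_G)$ and check, using naturality of $\eta$, that $N$ is a subfunctor of the identity: for $f:G\to G'$ and $n\in N(G)$ one has $\eta_{G'}(f(n))=Q(f)(\eta_G(n))=1$, so $f(n)\in N(G')$, and by the first isomorphism theorem $Q(G)\cong G/N(G)$. This converts the problem into a question about the subfunctor $N\subseteq \Id_{\sf Gr}$, to which Lemma \ref{lemma:subfunctor} and Corollaries \ref{cor:classes} and \ref{cor:Z} apply directly. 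Non-triviality of $Q$ translates into $N\ne \Id_{\sf Gr}$.

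The organizing idea is a trichotomy coming from $N(\ZZ)$. By Lemma \ref{lemma:subfunctor}(1) the subgroup $N(\ZZ)$ is characteristic in $\ZZ$; since every subgroup of $\ZZ$ is of the form $n\ZZ$ and all of them are characteristic, we get $N(\ZZ)=n\ZZ$ for a unique integer $n\geq 0$, and I would split into three cases according to $n$.

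If $n=1$, then $N(\ZZ)=\ZZ$, and Corollary \ref{cor:Z}, applied to the subfunctor $N$, forces $N=\Id_{\sf Gr}$, i.e. $Q$ is trivial, contradicting the hypothesis; so this case cannot occur. If $n\geq 2$, then $Q(\ZZ)\cong \ZZ/n\ZZ$ is a non-trivial finite cyclic group lying in $\IM(Q)$, and we are done. The remaining case $n=0$, in which $N(\ZZ)=0$ and $Q(\ZZ)\cong\ZZ$, is the one that needs the closure argument: I would invoke Corollary \ref{cor:classes}(2), which states that the class $\{G : N(G)=0\}$ is closed under arbitrary products and under passage to subgroups. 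Since $\ZZ$ lies in this class and every free abelian group embeds as $\bigoplus_{A}\ZZ\subseteq \prod_{A}\ZZ$, the free abelian group on any set $A$ also lies in the class; hence $Q(\bigoplus_{A}\ZZ)=\bigoplus_{A}\ZZ$, so $\IM(Q)$ contains all free abelian groups.

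The main (and rather mild) obstacle is really just the bookkeeping of the kernel-subfunctor dictionary together with the observation that the single test group $\ZZ$ governs the entire dichotomy. Once the case $n=0$ is isolated, the closure-under-products-and-subgroups property of Corollary \ref{cor:classes}(2) does all the remaining work with no further computation; the only point demanding a little care is confirming that $Q(\ZZ)$ trivial genuinely forces $Q$ trivial, which is exactly where Corollary \ref{cor:Z} is used.
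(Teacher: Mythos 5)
Your proof is correct and follows essentially the same route as the paper: both pass to the kernel subfunctor $F(G)=\Ker(G\to Q(G))$, use Corollary \ref{cor:Z} to rule out $F(\ZZ)=\ZZ$, observe that $F(\ZZ)=n\ZZ$ with $n\geq 2$ puts $\ZZ/n\ZZ$ in $\IM(Q)$, and apply Corollary \ref{cor:classes}(2) together with $\ZZ^{\oplus X}\subseteq \ZZ^X$ in the remaining case $F(\ZZ)=0$. Your explicit trichotomy on $N(\ZZ)$ is just a reorganization of the paper's contrapositive phrasing, not a different argument.
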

\begin{proof} Assume  that $\IM(Q)$ does not contain a non-trivial finite cyclic group and show that all free abelian groups are in  $\IM(Q).$ Set $F(G)=\Ker(G\to Q(G)).$  If $F(\ZZ)=\ZZ,$ then by Corollary \ref{cor:Z}, $Q$ is trivial. 
Assume that $F(\ZZ)\ne \ZZ.$ Using that non-trivial finite cyclic groups are not in $\IM(Q),$ we obtain $F(\ZZ)=0.$ Since $F(\ZZ)=0,$ Corollary \ref{cor:classes} (2) implies that $F(\ZZ^{\oplus X})=0$ for any set $X,$ and $Q(\ZZ^{\oplus X})\cong \ZZ^{\oplus X}.$ 
\end{proof}

\begin{corollary}
There is no a non-trivial quotient of the identity functor on the category of groups taking values in the subcategory of perfect groups (or free groups, or non-abelian groups). 
\end{corollary}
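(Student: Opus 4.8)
The plan is to read this off directly from Theorem \ref{th:gr:quotient}. That theorem says a non-trivial quotient $Q$ of $\Id_{\sf Gr}$ must have $\IM(Q)$ containing either a non-trivial finite cyclic group or all free abelian groups. So the whole argument reduces to checking, for each of the three candidate target classes $\mathcal{D}$ --- perfect groups, free groups, and (as the introduction makes precise) the class of groups that are trivial or non-abelian --- that $\mathcal{D}$ contains \emph{neither} a non-trivial finite cyclic group \emph{nor} every free abelian group. Since each of these three classes is isomorphism-closed and $\IM(Q)$ consists of objects isomorphic to some $Q(G)$, the hypothesis ``$Q$ takes values in $\mathcal{D}$'' gives $\IM(Q)\subseteq\mathcal{D}$; once both alternatives of Theorem \ref{th:gr:quotient} are excluded for $\mathcal{D}$, the theorem forces $Q$ to be trivial.

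First I would dispose of the finite-cyclic alternative. A non-trivial finite cyclic group $\ZZ/n$ (with $n\geq 2$) is abelian and non-trivial, hence not perfect (its commutator subgroup is trivial, so $G=[G,G]$ fails) and not non-abelian; and it has torsion, so it is not free, as free groups are torsion-free. Thus none of the three classes can contain a non-trivial finite cyclic group. Next I would rule out the free-abelian alternative: $\ZZ$ is itself non-trivial abelian, hence neither perfect nor non-abelian, which already excludes the perfect and non-abelian classes from containing all free abelian groups; and $\ZZ\oplus\ZZ$ is abelian of rank two, so it is not a free group (a non-trivial free group is either $\ZZ$ or non-abelian), which excludes the free class. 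Hence no choice of $\mathcal{D}$ contains all free abelian groups.

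There is no real obstacle here beyond bookkeeping: the only point requiring a touch of care is the phrase ``non-abelian groups,'' where the intended class is $\{\text{trivial}\}\cup\{\text{non-abelian}\}$, as spelled out in the introduction (``$QG$ is always either trivial or a non-abelian group''); the verifications above are tailored to exactly that reading. Combining the two paragraphs, for each $\mathcal{D}$ both disjuncts of Theorem \ref{th:gr:quotient} are impossible, so a non-trivial quotient $Q$ valued in $\mathcal{D}$ cannot exist, which is the assertion of the corollary.
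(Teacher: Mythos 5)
Your proposal is correct and follows exactly the paper's intended route: the corollary is stated as an immediate consequence of Theorem \ref{th:gr:quotient}, deduced by checking that none of the three target classes contains a non-trivial finite cyclic group or all free abelian groups, which is precisely what you verify. Your careful handling of the ``non-abelian'' case (reading it as trivial-or-non-abelian, since $\IM(Q)$ always contains the trivial group) is a worthwhile clarification that the paper leaves implicit.
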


\begin{example}[Quotient of the identity functor having values in a coreflective subcategory] \label{example:coreflective}
Denote by $\mathcal{T}$ the full subcategory of the category of groups generated by elements of finite order. This subcategory is coreflective. Indeed for any group $G,$ we can consider the subgroup $tG\subseteq G$ generated by elements of finite order, and the inclusion $tG \hookrightarrow G$ is the universal arrow from a group generated by torsion elements. On the other hand, we can consider the quotient $QG$ of a group $G$ by the normal subgroup generated by squares of all elements. Therefore $QG$ is a non-trivial quotient of the identity functor having values in the coreflective subcategory $\mathcal{T}.$
\end{example}

\begin{example}[Natural embedding into a perfect group] Although, there is no non-trivial  quotient of the identity functor taking values in the category of perfect groups, there is a natural embedding
\begin{equation}
 G \hookrightarrow C(G),   
\end{equation}
where $C(G)$ is an acyclic (in particular, perfect) group. There are two different constructions of such a functor $C(G):$ one of them uses the group of bijections $\mathbb Q\to \mathbb Q$ with ``compact  support'' \cite[\S 3.1]{kan1976every}; and another one uses the universal binate tower of groups \cite{berrick1989universal}, \cite{berrick1994binate}.  
\end{example}

\subsection{The category of abelian groups}

In this subsection we will study  subfunctors and quotients of the identity functor on the category of abelian groups. We will denote by $\ZZ(p^\infty)$ the $p$-quasicyclic group, which can be defined as $\ZZ(p^\infty) = \ZZ[1/p]/\ZZ,$ where $\ZZ[1/p]$ is the subgroup of $\QQ$ generated by elements $1/p^n.$

\begin{theorem} Let $F$ be a  non-vanishing, proper subfunctor of the identity functor on the category of abelian groups. Then  $\IM(F)$ contains either a non-trivial finite cyclic group or the $p$-quasicyclic group for some prime $p.$
\end{theorem}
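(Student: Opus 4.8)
The plan is to argue by contradiction. Assuming that $\IM(F)$ contains no non-trivial finite cyclic group and no quasicyclic group, I would deduce that $FA=0$ for every abelian group $A$, contradicting the non-vanishing hypothesis. The argument uses non-vanishing crucially; properness is not actually needed, though it excludes the trivial case $F=\Id$, for which the statement is immediate since $\IM(\Id)$ is everything. The only input is the evident abelian analogue of Lemma \ref{lemma:subfunctor} and Corollary \ref{cor:classes}: because $F$ is a subfunctor of the identity, every homomorphism $f:A\to B$ satisfies $f(FA)\subseteq FB$, so $F$ preserves injections, each $FA$ is fully invariant in $A$, and the class $\{A:FA=0\}$ is closed under subgroups and products, hence under arbitrary direct sums.

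First I would evaluate $F$ on cyclic and quasicyclic groups. If $FC\neq 0$ for some finite cyclic $C$, then $FC$ is a non-trivial subgroup of a cyclic group, hence itself a non-trivial finite cyclic group in $\IM(F)$, and there is nothing to prove; so under the contradiction hypothesis $F(\ZZ/m)=0$ for all $m$. The subgroups of $\ZZ(p^\infty)$ form a chain consisting of the finite cyclic $p$-groups together with the whole group, so $F(\ZZ(p^\infty))$ is a non-trivial finite cyclic group, or all of $\ZZ(p^\infty)$, or $0$; the first two would place a forbidden object in $\IM(F)$, so $F(\ZZ(p^\infty))=0$ for every prime $p$.

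The crux is $\QQ$, which is not an admissible output. Since $\mathrm{End}(\QQ)\cong\QQ$ acting by multiplication, $F\QQ$ is a $\QQ$-subspace of $\QQ$ and hence equals $0$ or $\QQ$. The delicate point is to rule out $F\QQ=\QQ$: I would push $F\QQ$ forward along the quotient $\QQ\twoheadrightarrow\QQ/\ZZ\cong\bigoplus_p\ZZ(p^\infty)$ and then along a projection to a single summand; by functoriality this forces $F(\ZZ(p^\infty))=\ZZ(p^\infty)$, contradicting the previous paragraph. Hence $F\QQ=0$.

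Finally I would bootstrap to all groups. Every divisible group is a direct sum of copies of $\QQ$ and of quasicyclic groups, and $\{A:FA=0\}$ is closed under direct sums, so $FD=0$ for every divisible $D$. Since every abelian group $A$ embeds into a divisible (equivalently, injective) group $E$ and $F$ preserves injections, $FA\subseteq FE=0$; thus $F$ vanishes identically, the desired contradiction. I expect the two non-routine steps to be the exclusion of $F\QQ=\QQ$ through the quasicyclic summands of $\QQ/\ZZ$, and the passage from the special groups above to an arbitrary group via its injective hull; the remaining steps are immediate consequences of the functoriality of $F$ together with the classification of the subgroups of $\ZZ(p^\infty)$.
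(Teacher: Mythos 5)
Your proof is correct, but it handles the crucial point --- the group $\QQ$ --- differently from the paper. The paper never evaluates $F$ on $\QQ$ at all: its key trick is the embedding $\QQ\hookrightarrow (\QQ/\ZZ)^{\mathbb{N}}$, $q\mapsto (q/1,q/2,q/3,\dots)$, which shows that \emph{every} abelian group embeds into a product of quasicyclic groups; the conclusion then follows from just three facts --- $F(\ZZ(p^\infty))=0$, preservation of injections, and $F(\prod G_\alpha)\subseteq \prod F(G_\alpha)$. You instead compute $F\QQ=0$ directly: full invariance makes $F\QQ$ a $\QQ$-subspace of $\QQ$, and the case $F\QQ=\QQ$ is excluded by pushing forward along the surjection $\QQ\twoheadrightarrow\QQ/\ZZ$ and projecting onto a quasicyclic summand, which would force $F(\ZZ(p^\infty))=\ZZ(p^\infty)$; you then assemble via the direct-sum decomposition of divisible groups and closure of $\{A: FA=0\}$ under direct sums (a subgroup of the product). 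Your route uses more of the functorial structure (push-forward along surjections, the endomorphism ring of $\QQ$) but is arguably the more systematic one: evaluate $F$ on each building block of divisible groups, then assemble. The paper's route needs a smaller toolkit (injections and products only, no surjectivity arguments) at the price of one clever embedding. Your side remark that properness is never used, only non-vanishing, also matches the paper: its proof likewise uses only the non-vanishing hypothesis.
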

\begin{proof} Assume the contrary, that  the  $\IM(F)$ contains no non-trivial finite cyclic groups or a $p$-quasicyclic group. 
Any abelian group $G$ can be embedded into a divisible abelian group $G\subseteq D.$ Any divisible abelian group $D$ is a direct sum of groups isomorphic to $\QQ$ and $\ZZ(p^\infty).$ Since a direct sum is embedded into the product, we obtain that any abelian group $G$ can be embedded into 
a product of copies of groups $\QQ$ and $\ZZ(p^\infty).$ In particular $\QQ/\ZZ\cong \bigoplus_{p} \ZZ(p^\infty)\subseteq \prod_{p} \ZZ(p^\infty).$ The group $\QQ$ can be embedded into the product $(\QQ/\ZZ)^{\mathbb{N}}$ by sending $q$ to $(q/1,q/2,q/3,\dots).$ Therefore, any abelian group $G$ can be embedded into a product of the $p$-quasicyclic groups 
\begin{equation}
 G \subseteq \prod_{\alpha \in A} \ZZ(p^\infty_\alpha).
\end{equation}
Since any proper subgroup of $\ZZ(p^\infty)$ is a finite cyclic group, the assumption implies $F(\ZZ(p^\infty))=0.$ Similarly by  Lemma \ref{lemma:subfunctor}, for any family of abelian groups $(G_\alpha)_{\alpha\in A}$ we have 
\begin{equation}
F( \prod_{\alpha\in A} G_\alpha ) \subseteq \prod_{\alpha\in A} F(G_\alpha).
\end{equation}
Since $F$ being, a sub-functor of the identity, it preserve injectivity of maps,  and hence it  follows that
\begin{equation}
 F(G) \subseteq   F(\prod_{\alpha \in A} \ZZ(p^\infty_\alpha)) \subseteq \prod_{\alpha \in A} F(\ZZ(p^\infty_\alpha))=0.   
\end{equation}
This  contradicts the non-vanishing assumption on $F.$ 
\end{proof}

\begin{corollary}\label{cor:ab1}
There is no a non-trivial subfunctor of the identity functor on the category of abelian groups taking values in the subcategory of 
torsion-free abelian groups.
\end{corollary}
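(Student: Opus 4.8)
The plan is to derive this as an immediate consequence of the preceding theorem. Suppose, for contradiction, that $F$ is a non-trivial subfunctor of the identity functor on the category of abelian groups all of whose values $F(G)$ are torsion-free; here ``non-trivial'' means that $F$ does not vanish identically. First I would observe that such an $F$ is automatically \emph{proper}: if $F$ were the identity functor, then, for instance, $F(\ZZ/2)=\ZZ/2$ would have to be torsion-free, which is false. Thus $F$ is a non-vanishing, proper subfunctor, so the hypotheses of the preceding theorem are satisfied.

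Applying that theorem, I would conclude that $\IM(F)$ contains either a non-trivial finite cyclic group or the $p$-quasicyclic group $\ZZ(p^\infty)$ for some prime $p.$ The key point is that both alternatives are purely torsion groups: a non-trivial finite cyclic group obviously has elements of finite order, and every element of $\ZZ(p^\infty)=\ZZ[1/p]/\ZZ$ has order a power of $p.$ On the other hand, since every object of $\IM(F)$ is isomorphic to some $F(G),$ and each $F(G)$ is torsion-free by assumption, every group in $\IM(F)$ must be torsion-free. These two facts are incompatible, yielding a contradiction; hence no such non-trivial $F$ can exist.

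There is essentially no serious obstacle here beyond correctly matching the hypotheses to the theorem. The only points requiring care are the interpretation of ``non-trivial'' as ``non-vanishing'' (so that properness is automatic from torsion-freeness, since the identity functor takes torsion values) and the elementary verification that the two exceptional groups produced by the theorem are torsion. Once these are in place, the statement follows at once.
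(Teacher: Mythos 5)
Your proof is correct and is precisely the argument the paper intends: the corollary is stated as an immediate consequence of the preceding theorem, since both possible members of $\IM(F)$ produced by that theorem (a non-trivial finite cyclic group or $\ZZ(p^\infty)$) are torsion groups, contradicting torsion-freeness of the values of $F.$ Your extra observation that properness is automatic (the identity functor takes torsion values such as $\ZZ/2$) correctly bridges the wording ``non-trivial'' in the corollary with the hypotheses ``non-vanishing, proper'' of the theorem.
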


\begin{theorem} Let $Q$ be a non-trivial quotient of the identity functor on the category of abelian groups. Then $\IM(Q)$ either contains a non-trivial finite cyclic group, or all free abelian groups.  
\end{theorem}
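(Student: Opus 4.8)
The plan is to mirror the proof of Theorem \ref{th:gr:quotient} almost verbatim, replacing free products by direct sums throughout, since in the abelian setting the categorical coproduct is the direct sum and every subgroup is automatically normal. First I would assume, aiming for a contradiction-free dichotomy, that $\IM(Q)$ contains no non-trivial finite cyclic group, and then deduce that every free abelian group lies in $\IM(Q)$. As in the group case, I set
\[
F(G)=\Ker\bigl(G\to Q(G)\bigr),
\]
which is a subfunctor of the identity functor on abelian groups (the kernel of the defining natural epimorphism $\Id\to Q$), and for which $Q(G)\cong G/F(G)$ canonically.

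Next I would record that the abelian analogues of Lemma \ref{lemma:subfunctor} and Corollary \ref{cor:classes} hold by the same arguments: the class of abelian groups $G$ with $F(G)=G$ is closed under direct sums and quotients, and the class with $F(G)=0$ is closed under products and subgroups. In particular the abelian analogue of Corollary \ref{cor:Z} holds, namely that $F(\ZZ)=\ZZ$ forces $F=\Id$. Indeed, every abelian group is a quotient of a direct sum of copies of $\ZZ$, so if $\ZZ$ lies in the closed class $\{G : F(G)=G\}$ then every abelian group does, whence $Q$ is trivial, contradicting the hypothesis. Thus the case $F(\ZZ)=\ZZ$ does not arise.

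The heart of the argument is the analysis of $F(\ZZ)$. Since $F(\ZZ)$ is a subgroup of $\ZZ$, it equals $n\ZZ$ for some $n\geq 0$. Having excluded $n=1$, there are two remaining possibilities. If $n\geq 2$, then $Q(\ZZ)\cong \ZZ/n\ZZ$ is a non-trivial finite cyclic group lying in $\IM(Q)$, contradicting the standing assumption; hence $n=0$, i.e. $F(\ZZ)=0$. In this case I would exploit the embedding of the free abelian group $\ZZ^{\oplus X}$ into the product $\ZZ^{X}$: since $F(\ZZ)=0$ and the vanishing class is closed under products, $F(\ZZ^{X})=0$, and since it is closed under subgroups, $F(\ZZ^{\oplus X})=0$. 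Therefore $Q(\ZZ^{\oplus X})\cong \ZZ^{\oplus X}$ for every set $X$, so all free abelian groups lie in $\IM(Q)$, completing the proof.

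The step I expect to require the most care is verifying that the structural lemmas of the group case genuinely transfer to the abelian category, since the coproduct changes from the free product to the direct sum. However, the only coproducts actually invoked are direct sums of copies of $\ZZ$, together with the single embedding $\ZZ^{\oplus X}\hookrightarrow \ZZ^{X}$, so this amounts to a routine transcription rather than a genuinely new obstacle; no step of the abelian argument uses any feature absent from the group-theoretic original.
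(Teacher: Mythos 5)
Your proof is correct, but it takes a different route from the paper's. The paper's own proof is a two-line reduction to the group-theoretic Theorem \ref{th:gr:quotient}: given the non-trivial quotient $Q$ of $\Id_{\sf Ab}$, it forms $\widetilde Q(G)=Q(G/[G,G])$, observes that this is a non-trivial quotient of the identity functor on ${\sf Gr}$ (non-trivial because $\widetilde Q(A)=Q(A)$ for abelian $A$), and notes that $\IM(\widetilde Q)\subseteq \IM(Q)$, so the dichotomy for $\widetilde Q$ transfers immediately to $Q$. You instead re-run the entire group-case argument internally to ${\sf Ab}$: re-establishing the closure properties of the classes $\{G: F(G)=G\}$ and $\{G:F(G)=0\}$ (now for direct sums, products, subgroups, quotients), the abelian analogue of Corollary \ref{cor:Z}, the analysis $F(\ZZ)=n\ZZ$ with the cases $n=1$, $n\geq 2$, $n=0$, and the embedding $\ZZ^{\oplus X}\hookrightarrow \ZZ^{X}$. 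Every step of this transcription is valid — in fact it is slightly easier than the group case since all subgroups are normal, so the characteristic-subgroup bookkeeping of Lemma \ref{lemma:subfunctor} disappears. What the paper's reduction buys is brevity and the avoidance of any re-verification: all structural lemmas are used as already proved, once and for all, in ${\sf Gr}$. What your version buys is a self-contained argument within ${\sf Ab}$ that makes explicit why the abelian statement holds on its own terms, without needing to check that $\widetilde Q$ is a well-defined, natural, non-trivial quotient functor on all groups; it also makes visible exactly which closure properties drive the dichotomy.
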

\begin{proof}
For a non-trivial quotient of the identity functor  $Q(G)=G/F(G)$ in the category of abelian groups $Q:{\sf Ab}\to {\sf Ab}$, we can construct a non-trivial quotient of the identity functor on the category of groups $\widetilde Q:{\sf Gr}\to {\sf Gr}$ by the formula $\widetilde Q(G)=Q(G/[G,G]).$ Then the assertion follows from Theorem \ref{th:gr:quotient}. 
\end{proof}

\begin{corollary}\label{cor:ab2}
There is no a non-trivial quotient of the identity functor on the category of abelian groups taking values in the subcategory of divisible groups.
\end{corollary}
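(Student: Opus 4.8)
The plan is to obtain this as an immediate consequence of the preceding theorem, since both alternatives offered by that theorem are incompatible with divisibility. First I would argue by contraposition. Suppose $Q$ is a quotient of the identity functor on abelian groups taking values in the subcategory of divisible groups, and suppose toward a contradiction that $Q$ is non-trivial. Then the preceding theorem applies and tells us that $\IM(Q)$ contains either a non-trivial finite cyclic group or all free abelian groups.

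Next I would rule out each of these alternatives using the hypothesis that every value $Q(G)$ is divisible. Since divisibility is preserved under isomorphism, every group in $\IM(Q)$ is divisible. But a non-trivial finite cyclic group $\ZZ/n\ZZ$ with $n\geq 2$ is not divisible, since the equation $nx=1$ has no solution, so the first alternative is excluded. Likewise $\ZZ$ is a free abelian group that fails to be divisible, so $\IM(Q)$ cannot contain all free abelian groups, which excludes the second alternative. Both alternatives therefore contradict the assumption that $\IM(Q)$ consists of divisible groups, forcing $Q$ to be trivial.

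I expect no genuine obstacle here: the entire content is carried by the preceding theorem, and the only thing left to verify is the elementary observation that neither a non-trivial finite cyclic group nor $\ZZ$ is divisible. The one point worth recording carefully is that \emph{trivial} here means the constant functor $Q(G)=0$, which is indeed consistent with taking values in divisible groups because the zero group is divisible; thus the statement is that this constant functor is the only divisible-valued quotient of the identity.
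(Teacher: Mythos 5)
Your proof is correct and is exactly the argument the paper intends: the corollary is stated without proof as an immediate consequence of the preceding theorem, obtained by noting that neither a non-trivial finite cyclic group nor $\ZZ$ is divisible, so both alternatives of the theorem are excluded. Your closing remark that the trivial quotient $Q(G)=0$ is consistent with divisibility is a nice point of care, though not strictly needed.
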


\begin{example}[Subfunctor of the identity functor having values in a reflective subcategory] \label{example:reflective_abelian}
The analogue of Theorem \ref{th:reflective} does not hold for the category of abelian groups. Denote by $\RR_{2}$ the full subcategory of the category of abelian groups without $2$-torsion. It is easy to see that it is reflective with the reflector $A\mapsto A/{\sf tor}_2(A),$ where ${\sf tor}_p(A)=\{a\in A\mid \exists n : p^n a=0\}.$   Then ${\sf tor}_3(A)$ is a subfunctor of the identity functor having values in the reflective subcategory $\RR_{2}.$ (In the category of all groups this construction does not work  because a group generated by $3$-torsion can have $2$-torsion).
\end{example}

\begin{example}
 The category of abelian groups ${\sf Ab}$ differs from the category of groups in the following sense. Consider the variety of abelian groups $T_p$ consisting of groups $A$ such that $pA=0$. Then the identity functor $\Id_{\sf Ab}$ has both a non-trivial subfunctor and a non-trivial quotient taking values in $T_p$, defined as the kernel and the cokernel of the map of multiplication by $p$  
\begin{equation}
0\longrightarrow A[p] \longrightarrow A \overset{p\cdot}\longrightarrow A \longrightarrow A/pA \longrightarrow 0. 
\end{equation}   
Therefore, the analogue of Corollary \ref{noabeliansubgroups} does not work for abelian groups. 
\end{example}

\begin{example}[Nontrivial (co)augmented functors with values in $\QQ$-vector spaces]
 Corollary \ref{cor:ab1} and \ref{cor:ab2} imply that there are no non-trivial subfunctors and quotient-functors of the identity functor of the category of abelian groups taking values in the category of $\QQ$-vector spaces. However, there are both nontrivial augmented functors and 
  nontrivial coaugmented functors with values in $\QQ$-vector spaces:
  
 \begin{equation}
  A \longrightarrow A\otimes \QQ, \hspace{1cm}  {\sf Hom}(\QQ, A)\longrightarrow A.  
 \end{equation}
 The image of the map ${\sf Hom}(\QQ, A)\to A$ sending $f$ to $f(1),$ is the largest divisible subgroup $D(A)\subseteq A.$ Note that $\IM(D)$ does not contain non-trivial cyclic groups but it contains $\ZZ(p^\infty)$ for any $p.$
\end{example}

\end{document}